\newtheorem{theorem}{Theorem}[section]
\newtheorem{proposition}[theorem]{Proposition}
\newtheorem{lemma}[theorem]{Lemma}
\theoremstyle{definition}
\newtheorem{definition}{Definition}[section]
\newtheorem{remark}[theorem]{Remark}
\newtheorem{notation}[theorem]{Notation}
\newtheorem{example}[theorem]{Example}
\newcommand{\Img}[1]{\ensuremath{\mathrm{Im} (#1)}}
\DeclareMathOperator{\Ima}{Im}
\begin{document}
\title{Notes on Super Projective Modules}
\author{Archana~S.~Morye, Aditya~Sarma~Phukon, Devichandrika~V.}
\address{University of Hydrabad,  UW Madison, University of Hyderbad}
\email{asmsm@uohyd.ac.in, sarmaphukon@wisc.edu, 18mmpp03@uohyd.ac.in}
\subjclass[2000]{Primary: 17A70; Secondary: 16D40, 16W50, 17C70, 16W55}
\keywords{Free supermodules, Super projective modules, Superspace}
  
\begin{abstract}
Projective modules are a link between geometry and algebra as established by the theorem of Serre-Swan. In this paper, we define the super analog of projective modules and explore this link in the case of some particular super geometric objects. We consider the tangent bundle over the supersphere and show that the module of vector field over a supersphere is a super projective module over the ring of supersmooth functions. Also, we discuss a class of super projective modules that can be constructed from a projection map on modules defined over the ring of supersmooth functions over superspheres.
\end{abstract}
\maketitle
\tableofcontents

\section{Introduction}
\label{section:01}
\textit{Projective modules} were first introduced by Cartan and Eilenberg in their 1956 Homological Algebra book \cite{CE1956} as a generalization of free modules. They defined these as modules $P$ such that if given any module homomorphism $f:P\rightarrow M''$ and any module epimorphism $g:A\rightarrow M''$, there is a module homomorphism $h:P \rightarrow M$ with $gh =f$. 

Shortly after, several nice properties of projective modules were discovered. Countably generated projective modules (and hence their direct sum) over a local ring were shown to be free modules by Kaplansky in 1958 \cite{Kap1958}. It followed from this that the category of locally free modules (over suitable ringed spaces) was equivalent to that of projective modules over global rings. With this, vector bundles could now be related to projective modules. The categorical equivalence of this was given by the Serre-Swan theorem \cite{Ser1955, Swa1962, Mor2013}, essentially bridging Algebra and Geometry. Later, with the work of Govorov and Lazard, it was shown that a module with finite presentation is flat if and only if projective, which laid the definitive link between flat, projective and free modules. 

\textit{Supersymmetry}, and subsequently, the bigger field of supermathematics, draws motivation from Physics. It is used to derive a potential extension to the standard model, proposing that for each Fermion, there exists a supersymmetric Boson and vice versa. A proper mathematical setup of supersymmetry was made available when Berezin and Katz \cite{BK1970} introduced supergroups, which were groups with commutating and anti-commutating parameters, and then partly based on the work by Grassmann, \textit{graded Lie algebra }(or Lie superalgebra). The latter was of interest to physicists as commutative Bosonic and anti-commutative Fermionic ladder operators could be encoded into a Lie superalgebra. Later, in a paper by Salam and Strathdee \cite{SS1974} a new `space' was introduced as an analog to Minkowski by considering a coset of a particular supergroup (Poincar\'{e} supergroup quotiented by Lorentz supergroup). This was named \textit{superspace} which was later used to define \textit{supermanifolds}. A more rigorous treatment of this space was done by Rogers \cite{Rog1986}, Boyer et al \cite{BG1984} and some others. This involved developing the analysis on \textit{supermanifolds} and introducing supersmooth functions. More super analogs of non-super geometric objects were subsequently introduced of which graded or supermodules defined by Jadczyk \cite{JP1981} are of particular interest. 

In this study, we define an analog of projective modules in superalgebra. The motivation for this comes from the question on whether the analog of the Serre-Swan theorem holds where the objects are replaced by their super versions. 
The paper is organized as follows. Section \ref{section:02} introduces the super analogs for objects - \emph{vector space, rings, and modules}. We first recall the definitions of a \textit{super vector space} and a \textit{super commutative ring} (Definition \ref{def:2.1}), building to the idea of a \textit{superalgebra} (Definition \ref{def:2.2}). We then discuss about \textit{supermodules} (Definition \ref{def:2.7}) and then subsequently \textit{free supermodules}. We also verify that the universal property of free supermodules holds (Theorem \ref{theorem:2.12}).

In Section \ref{section:03}, we introduce the main object of study -- \textit{super projective modules} and define them by giving three equivalent conditions (Theorem \ref{theorem:3.1}). This theorem can be used to find different examples of super projective modules such as those constructed by taking a supermanifold $X$ and then considering $C(X)-$modules. We also provide Proposition \ref{proposition:3.4} that show super projectivity of modules is preserved under tensor product and direct sum and that *End* of super projective modules is also super projective. We then proceed to give basic examples constructed using the established techniques in this section.

In Section \ref{section:04} we build up to the super analog of stably free modules. After defining a supercircle and verifying that it is a supermanifold, we show that the tangent bundle of the supercircle is trivial. Using this fact, we construct a stably free supermodule. The super version of this is included in the section, inspired by Kaplansky's proof which used the result that a free module can be decomposed into the kernel and image of an idempotent function. We go a dimension up and introduce superspheres. We then show two classes of super projective modules that can be obtained from them - one using tangent bundles and the other using projectors defined over the superspheres. The latter has been studied in \cite{Lan2001}.

\section {Superalgebra} 
\label{section:02}

Grassmann introduced the idea of superalgebra in 1844. Though the idea was not initially appreciated, in the second half of the 20th century, due to its use in differential geometry (differential forms, de Rham cohomology), and supersymmetry the idea of superalgebra is now extensively used in mathematics as well as in physics.  

In order to make the paper self-contained, we will recall basic notations and terminologies from superalgebra in this section. Towards this, we start the section with basic definitions such as super vector spaces, super commutative rings, and supermodules over them. We discuss free supermodules elaborately in this section and give proof of the universal property of the free supermodules (Theorem \ref{theorem:2.12}). For more details on superalgebra one can refer to  \cite[pp.~8-19]{Rog2007}, or \cite[Chapter 3, Section 1 to 6, pp.154-157]{Man1984}. 

\subsection{Super commutative rings}
\label{subsection:2.1}

A ring $(R,+,\cdot)$ is said to be a \emph{superring} if $(R,+)$ has two subgroups $R_0$ and $R_1$, such that $R=R_0\oplus R_1$, and $R_\alpha\cdot R_\beta\subset R_{\alpha+\beta}$ for all $\alpha,\beta\in \mathbb{Z}_2$.  We assume a ring with identity, and  $1\in R_0$. If $a\in R_0$ or $a\in R_1$ then $a$ is called a \emph{homogeneous element} of $R$, and we denote by $\lvert a\rvert=0$ (respectively $\lvert a\rvert=1$), if $a\in R_0$ (respectively $a\in R_1$). A superring which is also a field is called \emph{superfield}. 

Suppose $V$ is vector space over a superfield and $V_{0}, V_{1}$ are two subspaces such that $V=V_{0}\oplus V_{1}$. Then $V$ is called a \emph{super vector space}. 
The elements of $V_{0}$ are called \emph{even} and those of $V_{1}$ are called \emph{odd}. Any element $v \in V_i$ is called \emph{homogeneous} with degree, $\lvert v\rvert = i$ if $v\in V_{i}$, $i\in \mathbb{Z}_{2}$. 

\begin{definition} 
\label{def:2.1}
A superring $R$ is called \emph{super commutative ring} if 
$$[a,b]=ab-(-1)^{\lvert a \rvert \lvert b\rvert}ba=0 $$  
for all $a$, $b$ homogeneous elements of $R$.
\end{definition}

\begin{definition}
\label{def:2.2}	
Let $A=A_{0}\oplus A_{1}$ be a super vector space such that $A$ is also an algebra over a field $K$. A super vector space $A$ is a \emph{superalgebra} if it is also a superring. 
A superalgebra $A$ which is a super commutative ring is called  \emph{super commutative algebra}.
\end{definition}

\begin{notation}
\label{nota:2.3}
The set of natural number $\{0, 1,2,\ldots\}$  is denoted by $\mathbb{N}$.  Let $L, k \in \mathbb{N}$. If $L, k\neq 0$, we denote by $\underline{\lambda}$, a multi index $\underline{\lambda} = \lambda_1 \ldots \lambda_k$ with $1 \leq \lambda_1 < \lambda_2 < \cdots < \lambda_k \leq L$. We denote the empty multi index by $\underline{0}$ (corresponds to $k=0$). When $L=0$, by convention we will assume that there is only one multi index $\underline{0}$. Let $M_{[L]}$ be the set of all such multi indices. In particular $M_{[0]}=\{\underline{0}\}$. Let $l(\underline{\lambda})$ denote the number of indices in multi index $\underline{\lambda}$.  So $l(\underline{\lambda})=k$, for $\underline{\lambda} = \lambda_1 \ldots \lambda_k$, and $l(\underline{0})=0$. We denote by $M_{[L,0]}$ (respectively, $M_{[L,1]}$)the subset of $M_{[L]}$ consists of all multi index $\underline{\lambda}$ with $l(\underline{\lambda})$ is even (respectively, odd) length. Let $M_{[\infty]}$ define the set of multi indices  $$M_{[\infty]}=\cup_{L\in\mathbb{N}}M_{[L]}=\{\underline{\lambda}=\lambda_{1}\lambda_{2}\dots \lambda_{k}\vert 1 \leq \lambda_{1} < \lambda_{2} < \cdots < \lambda_{k}, \text{ for } k\in\mathbb{N}>0\}\cup \{\underline{0}\}.$$ Further,  $M_0=\{\underline{\lambda} \in M_{[\infty]}\vert l(\underline{\lambda})\text{ is even}\}$ and $M_1=\{\underline{\lambda}\in M_{[\infty]}\vert l(\underline{\lambda})\text{ is odd}\}$, hence $M_{[\infty]}=M_{0}\cup M_{1}$. 
\end{notation}

\begin{example}
\label{example:2.4}
For each finite positive integer $L$, $\mathbb{R}_{S[L]}$ denotes the \emph{Grassmann algebra over the ring of real numbers $\mathbb{R}$} with $L$ generators $\beta_{[0]}=1, \beta_{[1]} , \beta_{[2]} , \ldots ,\beta_{[L]}$, that is, it satisfies the relations 
\begin{equation}
\begin{gathered}
1 \beta_{[i]} = \beta_{[i]} = \beta_{[i]} 1, \ \text{for} \  i = 1,2, \ldots , L, \\
\hspace{.6cm}\beta_{[i]} \beta_{[j]} = - \beta_{[j]} \beta_{[i]}\ \text{for} \ i,j=1,2,\ldots, L.
\end{gathered}
\end{equation}
Any element $x$ of $\mathbb{R}_{S[L]}$ may thus be expressed as
$$x = \sum _{\underline{\lambda} \in M_{[L]}} x_{\underline{\lambda}} \beta_{[\underline{\lambda}]},$$
where $x_{\underline{\lambda}}$ is a real number, and $\beta_{[\underline{\lambda}]} = \beta_{[\lambda_1]} \cdots \beta_{[\lambda_k]}$, (with $\beta_{[\underline{0}]} = 1$) for all ${\underline{\lambda} \in M_L}$.
	
The Grassmann algebra $\mathbb{R}_{S[L]}$ is given the structure of a super commutative algebra by setting
$$\mathbb{R}_{S[L]} = \mathbb{R}_{S[L,0]} \oplus \mathbb{R}_{S[L,1]}, $$
with $\mathbb{R}_{S[L,0]}=\{x\in \mathbb{R}_{S[L]}\vert x =\sum_{\underline{\lambda} \in M_{[L,0]}} x_{\underline{\lambda}}\beta_{[\underline{\lambda}]}\}$, and $\mathbb{R}_{S[L,1]}=\{x\in \mathbb{R}_{S[L]}\vert x=\sum_{\underline{\lambda} \in M_{[L,1]}} x_{\underline{\lambda}}\beta_{[\underline{\lambda}]}\}$.
\end{example}
Similarly we can construct the Grassmann algebra over $\mathbb{R}$ with infinite generators as follows:

\begin{example}
\label{example:2.5}	
Let $\mathbb{R}_{S}$ be the algebra over $\mathbb{R}$ with generators $\{\beta_{[k]} \vert k \in \mathbb{N} \}$ where $\beta_{[0]} = 1$ and relations
\begin{equation}
\begin{gathered}
1 \beta_{[i]} =\beta_{[i]}=\beta_{[i]} 1 \qquad i\in \mathbb{N} \\
\beta_{[i]} \beta_{[j]} =-\beta_{[j]} \beta_{[i]} \qquad i, j\in \mathbb{N}\setminus \{0\}.
\end{gathered}
\end{equation}
Then any element $x \in \mathbb{R}_{S}$ is a formal sum
$$x = \sum_{\underline{\lambda}=\lambda_{1}\lambda_{2}\dots \lambda_{k} \in M_{[\infty]}}x_{\underline{\lambda}}\beta_{[\lambda_{1}]}\beta_{[\lambda_{2}]}\dots\beta_{[\lambda_{k}]},$$
where each $x_{\underline{\lambda}}\in \mathbb{R}$.
We can give $\mathbb{R}_{S}$ a superalgebra structure by defining the even elements and odd elements as follows:
Consider the subspaces $\mathbb{R}_{S_{0}}$ and $\mathbb{R}_{S_{1}}$ defined as 
\begin{align*}
\mathbb{R}_{S_{0}}&=\left\{x \in \mathbb{R}_{S} \bigg\vert x=\sum_{\underline{\lambda}=\lambda_{1}\lambda_{2}\dots \lambda_{k} \in M_{0}} x_{\underline{\lambda}}\beta_{[\lambda_{1}]}\beta_{[\lambda_{2}]}\dots\beta_{[\lambda_{k}]} \right \}\\
\mathbb{R}_{S_{1}}&=\left\{\xi\in \mathbb{R}_{S} \bigg\vert \xi=\sum_{\underline{\lambda}=\lambda_{1}\lambda_{2}\dots \lambda_{k} \in M_{1}} \xi_{\underline{\lambda}}\beta_{[\lambda_{1}]}\beta_{[\lambda_{2}]}\dots\beta_{[\lambda_{k}]}\right\}.
\end{align*}
Then, clearly $\mathbb{R}_{S}=\mathbb{R}_{S_{0}}\oplus \mathbb{R}_{S_{1}}$ making it a superalgebra.
\end{example}

Most properties of $\mathbb{R}_{S[L]}$ are also true for $\mathbb{R}_{S}$. However, there are some exceptions. One example is characterization of nilpotent elements of $\mathbb{R}_{S[L]}$; an element $x$ is nilpotent if and only if $x_{\underline{0}}=0$.  This is due to the fact that if $x_{\underline{0}} = 0$, then $x^{L + 1} = 0$. But this result is not true for $\mathbb{R}_S$. 

\begin{example}
\label{example:2.6}
Let $x = \Sigma _{i < j} \beta _{[i]} \beta_{[j]} \in \mathbb{R}_S$. Clearly $x_{\underline{0}} = 0$ but $x^n \neq 0$. When $n =1$, the coefficient of $\beta _{[1]} \beta_{[2]} = 1$. For $n = 2$, the coefficient of $\beta _{[1]} \beta_{[2]} \beta _{[3]} \beta_{[4]}$ is $2$. Let $\lambda_n = 1 \leq 2 < \cdots \leq 2n$, by induction we can see that the coefficient of $\beta_{[\lambda_n]}$ in $x^n$ is $n!$.  Hence $x^n\neq 0$, for all $n\in\mathbb{N}$, in particular $x$ is not nilpotent.    
\end{example}

\subsection{Supermodules}
\label{subsection:2.2}
Superrings are non-commutative; hence we need to define both the left and right supermodules. However, if the ring is super commutative then by simple sign change, we can convert the left module into right and vice versa. This makes the category of supermodules over super commutative rings comparatively simple.  In this subsection, we will define the category of supermodules over a super commutative ring. 

\begin{definition}
\label{def:2.7}
Let $R$ be a superring, and $M$ a right (respectively, left) module over $R$. A module $M$ is a \emph{right} (respectively, \emph{left}) \emph{supermodule over $R$} if it has two submodules $M_0$ and $M_1$ such that $M=M_0\oplus M_1$, and if for all $\alpha, \beta\in \mathbb{Z}_2$, one has $ M_\beta R_\alpha\subset M_{\beta+\alpha}$ (respectively, $R_\alpha M_\beta\subset M_{\alpha+\beta}$).
\end{definition}

Let $R$ be a super commutative ring and $M$ be any right supermodule over $R$.  Define $ax=(-1)^{\mid x \mid \mid a \mid} xa$ for all $x\in M$, $a\in R$ homogeneous elements.  This gives $M$ a left module structure. Hence, $M$ can be considered both sided supermodule over super commutative ring $R$. In this paper, we are considering all rings to be super commutative. Hence, by the above observation, we will use the term \emph{supermodule over $R$} without mentioning right or left. Here onward we will consider modules as a right module over $R$, unless stated otherwise.

\begin{definition}
\label{def:2.8}
Let $R$ be a super commutative ring, and $M$, $N$ supermodules over $R$. We say that a morphism $\phi:M\to N$ is a \emph{$R$-linear morphism} if $\phi(xa) = \phi(x)a$ for all $x\in M$ and $a\in R$. We say that $\phi$ has \emph{degree} $\vert\phi\vert=\beta\in\mathbb{Z}_2$ if $\phi(M_\alpha)\subset N_{\alpha+\beta}$ for all $\alpha\in \mathbb{Z}_2$. If $\vert\phi\vert = 0$, then $\phi$ is said to be \emph{even morphism} and if $\vert\phi\vert = 1$, then $\phi$ is said to be \emph{odd morphism}.  If $\phi$ is even or odd, we say $\phi$ is \emph{homogeneous}.  
\end{definition}

Since $R$ is a super commutative ring, right supermodules $M$ and $N$ can also be considered as left supermodules over $R$. In that case, $\phi(ax)=(-1)^{\vert\phi\vert\vert a\vert}a\phi(x)$ for all $x \in M$, $a \in R$ and $\phi$ homogeneous.

\begin{lemma}
\label{lemma:2.9}
Let $R$ be a super commutative ring, and $M$, $N$ supermodules over $R$. Then, $\mathrm{Hom}_R(M,N)$ is a supermodule over $R$.
\end{lemma}
\begin{proof}
Let $\phi:M\to N$ be a $R$-linear morphism.  Consider endomorphisms of $M$,  $\mathrm{P}^M_0,\mathrm{P}^M_1:M\to M$ given by $\mathrm{P}^M_0(m_0,m_1)=(m_0,0)$ and $\mathrm{P}^M_1(m_0,m_1)=(0,m_1)$.  Similarly we define $\mathrm{P}^N_\alpha:N\to N$, for $\alpha=0,1$. 

Let 
$$\phi_0=\mathrm{P}^N_0\circ \phi\circ \mathrm{P}^M_0+ \mathrm{P}^N_1\circ \phi\circ \mathrm{P}^M_1:M\to N$$ and $$\phi_1=\mathrm{P}^N_0\circ \phi\circ \mathrm{P}^M_1 + \mathrm{P}^N_1\circ \phi\circ \mathrm{P}^M_0:M\to N.$$  
By construction, it is clear that $\phi=\phi_0\oplus \phi_1$, and $\phi_0$ (respectively, $\phi_1$) is even (respectively, odd) morphism. This induces a natural grading on $\mathrm{Hom}_R(M,N)$ of $R$-linear morphism,
$$\mathrm{Hom}_R(M,N)=\bigl(\mathrm{Hom}_R(M,N)\bigr)_0\oplus \bigl(\mathrm{Hom}_R(M,N)\bigr)_1.$$ 
This makes $\mathrm{Hom}_R(M,N)$ a supermodule over $R$.
\end{proof}

Let $R$ be a super commutative ring.  We denote by $R$-$\mathrm{SMod}$  the category of supermodules over $R$, that is, $\mathrm{Ob}(R\text{-}\mathrm{SMod})$  is the collection of supermodules over $R$. And $\mathrm{Hom}_{R\text{-}\mathrm{SMod}}(M,N)$  is the set of $R$-linear morphisms from $M$ to $N$.  

\subsection{Free supermodules}
\label{subsection:2.3}
In this subsection we define \emph{free supermodules}, then state and prove the \emph{universal property of free supermodules} in superalgebra which is analogous to the universal property of free modules over a commutative algebra. 

\begin{definition}
\label{def:2.10}
A supermodule over super commutative ring $R$ is said to be \emph{free} if it has a basis formed by homogeneous elements.
Let $I$ and $J$ are arbitrary sets.  A supermodule $F$ over $R$ is said to be \emph{free of the type $(I,J)$} if there exists a set $\{f_i^0\in F_0\vert i\in I\}$ and $\{f_j^1\in F_1\vert j\in J\}$ such that 
$$F\cong \bigl(\oplus_{i\in I}  f_i^0 R \bigl)\oplus \bigl(\oplus _{j\in J}  f_j^1 R\bigr) .$$
\end{definition}

The definition for free supermodules is given in \cite{Rog2007} and \cite{Man1984} for the finite rank, where $I$ and $J$ are finite sets. But here we consider $I,J$ both to be arbitrary. If $f\in F$, then $f=\sum_{i\in I}f_i^0 c_i+\sum_{j\in J} f_j^1 c_j$ where $c_i,c_j\in R$ (may not be homogeneous), and $c_i=0$ (respectively, $c_j=0$) for all but finitely, many $i\in I$ (respectively, $j\in J$). 

\begin{notation}
\label{nota:2.11} Since $F$ is also a supermodule it is useful to write elements of $F$ in terms of $(x,y)$, where $x\in F_0$ even, and  $y\in F_1$ odd. Let $c_i=a_i+b_i$, where $a_i\in R_0$, and $b_i\in R_1$.  Similarly, $c_j=a_j + b_j$, where $a_j\in R_0$, and $b_j\in R_1$.  By the above discussion we get a general element of $F$ is of the form $(x,y)\in F_0\oplus F_1$, where $\displaystyle{x=\sum_{i\in I} f_i^0a_i + \sum_{j\in J} f_j^1} b_j $, and $\displaystyle{y= \sum_{i\in I} f_i^0 b_i + \sum_{j\in J} f_j^1  a_j}$, where $a_i, a_j \in R_0$, and $b_i, b_j \in R_1$, and $a_i=0$ and $b_i = 0$ (respectively, $a_j=0$ and $b_j = 0$) for all but finitely many $i\in I$ (respectively, $j\in J$).
\end{notation}

\begin{theorem}[\bfseries The universal property of free supermodules]\label{firstlabel}%
\label{theorem:2.12}	
Let $R$ be a super commutative ring, and $I,J$ arbitrary sets.  Let $F$ be a free supermodule over $R$ of the type $(I,J)$, and let $\mathcal{B}=\{f_i^0, f_j^1\vert i\in I, j\in J\}$ be a basis of $F$.  Let $M$ be a supermodule over $R$, and $\phi:\mathcal{B}\to M$ a set map. Then, there exists an unique $R$-linear morphism $\widetilde{\phi}:F\to M$ such that the following diagram commutes.
$$\begin{tikzcd}
\mathcal{B} \arrow[rd, "\phi"'] \arrow[rr, "i", hook] &   & F \arrow[ld, "!\exists\widetilde{\phi}"] \\ & M &                                         
\end{tikzcd}$$
\end{theorem}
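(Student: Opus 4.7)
The plan is to imitate the classical universal property for free modules, with essentially no extra work needed from the super setting beyond the unique basis expansion recorded in Notation \ref{nota:2.11}. The argument splits into existence (construct a candidate $\widetilde{\phi}$ and verify $R$-linearity plus commutativity of the triangle) and uniqueness (any two $R$-linear extensions of $\phi$ must agree on all of $F$).

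For existence I would define $\widetilde{\phi}$ directly from the basis expansion. Every $f \in F$ admits a unique expression $f = \sum_{i \in I} f_i^0 c_i + \sum_{j \in J} f_j^1 c_j$ with $c_i, c_j \in R$ and only finitely many nonzero, since $\mathcal{B}$ is a basis. Set
$$\widetilde{\phi}(f) := \sum_{i \in I} \phi(f_i^0)\, c_i + \sum_{j \in J} \phi(f_j^1)\, c_j,$$
which is well-defined by uniqueness of the expansion. A small but important observation is that since $\phi$ is only a set map, $\widetilde{\phi}$ need not be homogeneous; this is harmless because Definition \ref{def:2.8} requires only $\widetilde{\phi}(xa) = \widetilde{\phi}(x) a$ and does not demand that an $R$-linear morphism respect the grading.

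The two remaining verifications are routine. Commutativity of the diagram is immediate: for a basis element $f_i^0$ the expansion is $f_i^0 \cdot 1$ with $1 \in R_0$, so $\widetilde{\phi}(f_i^0) = \phi(f_i^0)$, and symmetrically for $f_j^1$. For $R$-linearity, given $a \in R$, writing $fa = \sum_i f_i^0 (c_i a) + \sum_j f_j^1 (c_j a)$ and applying the definition yields $\widetilde{\phi}(fa) = \widetilde{\phi}(f)\, a$; here I use associativity of the $R$-action on $M$ together with the fact that distributing a single right-multiplication over a finite sum requires no sign adjustment.

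For uniqueness, suppose $\psi: F \to M$ is $R$-linear with $\psi \circ i = \phi$. Then for any $f = \sum_i f_i^0 c_i + \sum_j f_j^1 c_j$, $R$-linearity forces
$$\psi(f) = \sum_i \psi(f_i^0)\, c_i + \sum_j \psi(f_j^1)\, c_j = \sum_i \phi(f_i^0)\, c_i + \sum_j \phi(f_j^1)\, c_j = \widetilde{\phi}(f),$$
so $\psi = \widetilde{\phi}$. I do not anticipate any genuine obstacle. The one trap to avoid is trying to define $\widetilde{\phi}$ on $F_0$ and $F_1$ separately and then glue; this would silently assume that $\phi$ respects parity. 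Since the hypothesis allows an arbitrary set map, the extension is built in one step and is permitted to be inhomogeneous, after which the decomposition of Lemma \ref{lemma:2.9} (if desired) splits it into even and odd parts.
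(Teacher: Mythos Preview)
Your proposal is correct and follows essentially the same approach as the paper: define $\widetilde{\phi}$ by the formula $\widetilde{\phi}(f)=\sum_i \phi(f_i^0)c_i+\sum_j \phi(f_j^1)c_j$, note that the images need not be homogeneous, and check that this is a well-defined $R$-linear map extending $\phi$. In fact you supply more detail than the paper, which omits the explicit verifications of $R$-linearity and uniqueness that you spell out.
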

\begin{proof}
Let  $f\in F$, then by definition $f=\sum_{i\in I}f_i^0 c_i+\sum_{j\in J} f_j^1 c_j$ where $c_i,c_j\in R$ (may not be homogeneous), and $c_i=0$ (respectively, $c_j=0$) for all but finitely many $i\in I$ (respectively, $j\in J$). Define,
\begin{equation}
\label{eq:1}
\widetilde{\phi}(f)=\widetilde{\phi}\left( \sum_{i\in I}f_i^0 c_i+\sum_{j\in J} f_j^1 c_j\right)=\sum_{i\in I} \phi(f_i^0)c_i+\sum_{j\in J} \phi(f_j^1) c_j.
\end{equation}
(Note that $\phi(f_i^0)$ and $\phi(f_\alpha^1)$ may not be homogeneous elements.)
Then, $\widetilde{\phi}$ is well defined and $R$-linear morphism. By definition, we get $\widetilde{\phi} \circ i=\phi$.
\end{proof}

When $F$ is considered as a left module over $R$, the degree needs to be considered while defining $\widetilde{\phi}$. If $\phi$ is an \emph{even set map}, that is, $\vert\phi(f)\vert=\vert f\vert$ for all homogeneous element $f\in \mathcal{B}$, then as a left module  $\widetilde{\phi}$ is simply $R$-linear morphism same as that of above theorem.  We call $\phi$ an \emph{odd set map} if $\vert\phi(f)\vert=\vert f\vert+1$.  In the following remark we describe $\widetilde{\phi}$ when $F$ considered as a left module over $R$.

\begin{remark} 
\label{remark:2.13} 
Notations as in Notation \ref{nota:2.11} and Theorem \ref{theorem:2.12}. Consider $F$ as a left supermodule over $R$, and $\phi$ an odd set map. Let $(x,y)\in F$ be general form
\[
x=\sum_{i\in I} a_if_i^0 + \sum_{j\in J} b_j f_j^1,\,\,
y=\sum_{i\in I} b_i f_i^0 + \sum_{j\in J} a_j f_j^1,\]
where $a_i, a_j \in R_0$, and $b_i, b_j \in R_1$, and $a_i=0$ and $b_i = 0$ (respectively, $a_j=0$ and $b_j = 0$) for all but finitely many $i\in I$ (respectively, $j\in J$). Let $\phi(f_i^0)=m_i^1\in M_1$, and $\phi(f_j^1) = m_j^0\in M_0$. Let $(x,y) \in F$ be defined as above. Then, 
\begin{align*}
\widetilde{\phi}(x,y)&=\sum_{i\in I} \widetilde{\phi}(a_if_i^0)+\sum_{j\in J}\widetilde{\phi}(b_j f_j^1) + \sum_{i\in I} \widetilde{\phi}(b_if_i^0)+\sum_{j\in J}\widetilde{\phi}(a_j f_j^1) \\
&=\sum_{i\in I} \widetilde{\phi}(f_i^0a_i)-\sum_{j\in J}\widetilde{\phi}(f_j^1b_j) + \sum_{i\in I} \widetilde{\phi}(f_i^0b_i)+\sum_{j\in J}\widetilde{\phi}( f_j^1 a_j) \\
&=\sum_{i\in I} m_i^1 a_i-\sum_{j\in J} m^0_j b_j + \sum_{i\in I} m_i^1b_i+\sum_{j\in J} m_j^0 a_j \\
&=\sum_{i\in I} a_i m_i^1 - \sum_{j\in J} b_j m_j^0 -\sum_{i\in I} b_i m_i^1 + \sum_{j\in J} a_j m_j^0.
\end{align*}	
\end{remark}

\subsection{Supermodules}
\label{subsection:2.2}
Superrings are non-commutative; hence we need to define both the left and right supermodules. However, if the ring is super commutative then by simple sign change, we can convert the left module into right and vice versa. This makes the category of supermodules over super commutative rings comparatively simple.  In this subsection, we will define the category of supermodules over a super commutative ring. 

\begin{definition}
\label{def:2.7}
Let $R$ be a superring, and $M$ a right (respectively, left) module over $R$. A module $M$ is a \emph{right} (respectively, \emph{left}) \emph{supermodule over $R$} if it has two submodules $M_0$ and $M_1$ such that $M=M_0\oplus M_1$, and if for all $\alpha, \beta\in \mathbb{Z}_2$, one has $ M_\beta R_\alpha\subset M_{\beta+\alpha}$ (respectively, $R_\alpha M_\beta\subset M_{\alpha+\beta}$).
\end{definition}

Let $R$ be a super commutative ring and $M$ be any right supermodule over $R$.  Define $ax=(-1)^{\mid x \mid \mid a \mid} xa$ for all $x\in M$, $a\in R$ homogeneous elements.  This gives $M$ a left module structure. Hence, $M$ can be considered both sided supermodule over super commutative ring $R$. In this paper, we are considering all rings to be super commutative. Hence, by the above observation, we will use the term \emph{supermodule over $R$} without mentioning right or left. Here onward we will consider modules as a right module over $R$, unless stated otherwise.

\begin{definition}
\label{def:2.8}
Let $R$ be a super commutative ring, and $M$, $N$ supermodules over $R$. We say that a morphism $\phi:M\to N$ is a \emph{$R$-linear morphism} if $\phi(xa) = \phi(x)a$ for all $x\in M$ and $a\in R$. We say that $\phi$ has \emph{degree} $\vert\phi\vert=\beta\in\mathbb{Z}_2$ if $\phi(M_\alpha)\subset N_{\alpha+\beta}$ for all $\alpha\in \mathbb{Z}_2$. If $\vert\phi\vert = 0$, then $\phi$ is said to be \emph{even morphism} and if $\vert\phi\vert = 1$, then $\phi$ is said to be \emph{odd morphism}.  If $\phi$ is even or odd, we say $\phi$ is \emph{homogeneous}.  
\end{definition}

Since $R$ is a super commutative ring, right supermodules $M$ and $N$ can also be considered as left supermodules over $R$. In that case, $\phi(ax)=(-1)^{\vert\phi\vert\vert a\vert}a\phi(x)$ for all $x \in M$, $a \in R$ and $\phi$ homogeneous.

\begin{lemma}
\label{lemma:2.9}
Let $R$ be a super commutative ring, and $M$, $N$ supermodules over $R$. Then, $\mathrm{Hom}_R(M,N)$ is a supermodule over $R$.
\end{lemma}
\begin{proof}
Let $\phi:M\to N$ be a $R$-linear morphism.  Consider endomorphisms of $M$,  $\mathrm{P}^M_0,\mathrm{P}^M_1:M\to M$ given by $\mathrm{P}^M_0(m_0,m_1)=(m_0,0)$ and $\mathrm{P}^M_1(m_0,m_1)=(0,m_1)$.  Similarly we define $\mathrm{P}^N_\alpha:N\to N$, for $\alpha=0,1$. 

Let 
$$\phi_0=\mathrm{P}^N_0\circ \phi\circ \mathrm{P}^M_0+ \mathrm{P}^N_1\circ \phi\circ \mathrm{P}^M_1:M\to N$$ and $$\phi_1=\mathrm{P}^N_0\circ \phi\circ \mathrm{P}^M_1 + \mathrm{P}^N_1\circ \phi\circ \mathrm{P}^M_0:M\to N.$$  
By construction, it is clear that $\phi=\phi_0\oplus \phi_1$, and $\phi_0$ (respectively, $\phi_1$) is even (respectively, odd) morphism. This induces a natural grading on $\mathrm{Hom}_R(M,N)$ of $R$-linear morphism,
$$\mathrm{Hom}_R(M,N)=\bigl(\mathrm{Hom}_R(M,N)\bigr)_0\oplus \bigl(\mathrm{Hom}_R(M,N)\bigr)_1.$$ 
This makes $\mathrm{Hom}_R(M,N)$ a supermodule over $R$.
\end{proof}

Let $R$ be a super commutative ring.  We denote by $R$-$\mathrm{SMod}$  the category of supermodules over $R$, that is, $\mathrm{Ob}(R\text{-}\mathrm{SMod})$  is the collection of supermodules over $R$. And $\mathrm{Hom}_{R\text{-}\mathrm{SMod}}(M,N)$  is the set of $R$-linear morphisms from $M$ to $N$.  

\subsection{Free supermodules}
\label{subsection:2.3}
In this subsection we define \emph{free supermodules}, then state and prove the \emph{universal property of free supermodules} in superalgebra which is analogous to the universal property of free modules over a commutative algebra. 

\begin{definition}
\label{def:2.10}
A supermodule over super commutative ring $R$ is said to be \emph{free} if it has a basis formed by homogeneous elements.
Let $I$ and $J$ are arbitrary sets.  A supermodule $F$ over $R$ is said to be \emph{free of the type $(I,J)$} if there exists a set $\{f_i^0\in F_0\vert i\in I\}$ and $\{f_j^1\in F_1\vert j\in J\}$ such that 
$$F\cong \bigl(\oplus_{i\in I}  f_i^0 R \bigl)\oplus \bigl(\oplus _{j\in J}  f_j^1 R\bigr) .$$
\end{definition}

The definition for free supermodules is given in \cite{Rog2007} and \cite{Man1984} for the finite rank, where $I$ and $J$ are finite sets. But here we consider $I,J$ both to be arbitrary. If $f\in F$, then $f=\sum_{i\in I}f_i^0 c_i+\sum_{j\in J} f_j^1 c_j$ where $c_i,c_j\in R$ (may not be homogeneous), and $c_i=0$ (respectively, $c_j=0$) for all but finitely, many $i\in I$ (respectively, $j\in J$). 

\begin{notation}
\label{nota:2.11} Since $F$ is also a supermodule it is useful to write elements of $F$ in terms of $(x,y)$, where $x\in F_0$ even, and  $y\in F_1$ odd. Let $c_i=a_i+b_i$, where $a_i\in R_0$, and $b_i\in R_1$.  Similarly, $c_j=a_j + b_j$, where $a_j\in R_0$, and $b_j\in R_1$.  By the above discussion we get a general element of $F$ is of the form $(x,y)\in F_0\oplus F_1$, where $\displaystyle{x=\sum_{i\in I} f_i^0a_i + \sum_{j\in J} f_j^1} b_j $, and $\displaystyle{y= \sum_{i\in I} f_i^0 b_i + \sum_{j\in J} f_j^1  a_j}$, where $a_i, a_j \in R_0$, and $b_i, b_j \in R_1$, and $a_i=0$ and $b_i = 0$ (respectively, $a_j=0$ and $b_j = 0$) for all but finitely many $i\in I$ (respectively, $j\in J$).
\end{notation}

\begin{theorem}[\bfseries The universal property of free supermodules]\label{firstlabel}%
\label{theorem:2.12}	
Let $R$ be a super commutative ring, and $I,J$ arbitrary sets.  Let $F$ be a free supermodule over $R$ of the type $(I,J)$, and let $\mathcal{B}=\{f_i^0, f_j^1\vert i\in I, j\in J\}$ be a basis of $F$.  Let $M$ be a supermodule over $R$, and $\phi:\mathcal{B}\to M$ a set map. Then, there exists an unique $R$-linear morphism $\widetilde{\phi}:F\to M$ such that the following diagram commutes.
$$\begin{tikzcd}
\mathcal{B} \arrow[rd, "\phi"'] \arrow[rr, "i", hook] &   & F \arrow[ld, "!\exists\widetilde{\phi}"] \\ & M &                                         
\end{tikzcd}
$$
\end{theorem}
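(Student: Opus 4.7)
The plan is to define $\widetilde{\phi}$ by the only formula compatible with $R$-linearity and the extension condition, namely the one already written down in the statement:
$$\widetilde{\phi}\Bigl(\sum_{i\in I} f_i^0 c_i + \sum_{j\in J} f_j^1 c_j\Bigr) = \sum_{i\in I} \phi(f_i^0) c_i + \sum_{j\in J} \phi(f_j^1) c_j,$$
and then to verify in turn: well-definedness, finiteness of the image sum, $R$-linearity, the identity $\widetilde{\phi}\circ i=\phi$, and uniqueness.

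For well-definedness I would invoke the freeness of $F$ given in Definition \ref{def:2.10}: the isomorphism $F\cong\bigl(\oplus_{i\in I} f_i^0 R\bigr)\oplus\bigl(\oplus_{j\in J} f_j^1 R\bigr)$ says that every $f\in F$ admits a \emph{unique} expression $f=\sum_{i} f_i^0 c_i+\sum_{j} f_j^1 c_j$ with $c_i,c_j\in R$ and all but finitely many $c_i, c_j$ equal to zero. Thus the right-hand side of the defining formula depends only on $f$, and the sum in $M$ is a finite $R$-linear combination, hence lies in $M$. Note that I would not yet split each $c_i$ into its even and odd parts as in Notation \ref{nota:2.11}; that refinement is only needed for the odd left-module computation carried out in Remark \ref{remark:2.13}.

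For $R$-linearity, given $a\in R$, writing $f=\sum_i f_i^0 c_i+\sum_j f_j^1 c_j$ yields $fa=\sum_i f_i^0(c_i a)+\sum_j f_j^1(c_j a)$, and applying the defining formula together with associativity of the $R$-action on $M$ gives
$$\widetilde{\phi}(fa)=\sum_i \phi(f_i^0)(c_i a)+\sum_j \phi(f_j^1)(c_j a)=\Bigl(\sum_i \phi(f_i^0)c_i+\sum_j \phi(f_j^1)c_j\Bigr) a=\widetilde{\phi}(f) a.$$
Commutativity of the triangle, $\widetilde{\phi}\circ i=\phi$, is immediate: evaluating the formula at a basis element $f_i^0$ (take $c_i=1$ and all other coefficients zero) gives $\phi(f_i^0)$, and similarly for $f_j^1$. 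Uniqueness follows because any $R$-linear extension $\psi:F\to M$ of $\phi$ must satisfy $\psi(\sum_i f_i^0 c_i+\sum_j f_j^1 c_j)=\sum_i \psi(f_i^0)c_i+\sum_j \psi(f_j^1)c_j=\sum_i \phi(f_i^0)c_i+\sum_j \phi(f_j^1)c_j$, which coincides with $\widetilde{\phi}(f)$.

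I do not expect a serious obstacle here; the argument is the standard universal-property proof for free modules, transported verbatim. The only place one has to be careful is that neither the coefficients $c_i,c_j\in R$ nor the images $\phi(f_i^0),\phi(f_j^1)\in M$ are assumed to be homogeneous, so one must resist the temptation to insert Koszul sign corrections. For right supermodules over a super commutative $R$, $R$-linearity as in Definition \ref{def:2.8} does not involve the degree, and the construction goes through unchanged; the grading of $F$ (built from the homogeneity of the basis $\mathcal{B}$) is not used in the theorem itself but becomes relevant only when one later wants to compare the even and odd components of $\widetilde{\phi}$, which is handled by the splitting of Lemma \ref{lemma:2.9}.
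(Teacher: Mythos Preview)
Your proposal is correct and follows exactly the same approach as the paper: define $\widetilde{\phi}$ by the formula $\widetilde{\phi}\bigl(\sum_i f_i^0 c_i+\sum_j f_j^1 c_j\bigr)=\sum_i \phi(f_i^0)c_i+\sum_j \phi(f_j^1)c_j$, note that the $c_i,c_j$ and the images $\phi(f_i^0),\phi(f_j^1)$ need not be homogeneous, and observe that well-definedness, $R$-linearity, and $\widetilde{\phi}\circ i=\phi$ are immediate. Your write-up is in fact more detailed than the paper's, which simply asserts these properties without spelling out the verifications or the uniqueness argument.
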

\begin{proof}
Let  $f\in F$, then by definition $f=\sum_{i\in I}f_i^0 c_i+\sum_{j\in J} f_j^1 c_j$ where $c_i,c_j\in R$ (may not be homogeneous), and $c_i=0$ (respectively, $c_j=0$) for all but finitely many $i\in I$ (respectively, $j\in J$). Define,
\begin{equation}
\label{eq:1}
\widetilde{\phi}(f)=\widetilde{\phi}\left( \sum_{i\in I}f_i^0 c_i+\sum_{j\in J} f_j^1 c_j\right)=\sum_{i\in I} \phi(f_i^0)c_i+\sum_{j\in J} \phi(f_j^1) c_j.
\end{equation}
(Note that $\phi(f_i^0)$ and $\phi(f_\alpha^1)$ may not be homogeneous elements.)
Then, $\widetilde{\phi}$ is well defined and $R$-linear morphism. By definition, we get $\widetilde{\phi} \circ i=\phi$.
\end{proof}

When $F$ is considered as a left module over $R$, the degree needs to be considered while defining $\widetilde{\phi}$. If $\phi$ is an \emph{even set map}, that is, $\vert\phi(f)\vert=\vert f\vert$ for all homogeneous element $f\in \mathcal{B}$, then as a left module  $\widetilde{\phi}$ is simply $R$-linear morphism same as that of above theorem.  We call $\phi$ an \emph{odd set map} if $\vert\phi(f)\vert=\vert f\vert+1$.  In the following remark we describe $\widetilde{\phi}$ when $F$ considered as a left module over $R$.

\begin{remark} 
\label{remark:2.13} 
Notations as in Notation \ref{nota:2.11} and Theorem \ref{theorem:2.12}. Consider $F$ as a left supermodule over $R$, and $\phi$ an odd set map. Let $(x,y)\in F$ be general form
\[
x=\sum_{i\in I} a_if_i^0 + \sum_{j\in J} b_j f_j^1,\,\,
y=\sum_{i\in I} b_i f_i^0 + \sum_{j\in J} a_j f_j^1,\]
where $a_i, a_j \in R_0$, and $b_i, b_j \in R_1$, and $a_i=0$ and $b_i = 0$ (respectively, $a_j=0$ and $b_j = 0$) for all but finitely many $i\in I$ (respectively, $j\in J$). Let $\phi(f_i^0)=m_i^1\in M_1$, and $\phi(f_j^1) = m_j^0\in M_0$. Let $(x,y) \in F$ be defined as above. Then, 
\begin{align*}
\widetilde{\phi}(x,y)&=\sum_{i\in I} \widetilde{\phi}(a_if_i^0)+\sum_{j\in J}\widetilde{\phi}(b_j f_j^1) + \sum_{i\in I} \widetilde{\phi}(b_if_i^0)+\sum_{j\in J}\widetilde{\phi}(a_j f_j^1) \\
&=\sum_{i\in I} \widetilde{\phi}(f_i^0a_i)-\sum_{j\in J}\widetilde{\phi}(f_j^1b_j) + \sum_{i\in I} \widetilde{\phi}(f_i^0b_i)+\sum_{j\in J}\widetilde{\phi}( f_j^1 a_j) \\
&=\sum_{i\in I} m_i^1 a_i-\sum_{j\in J} m^0_j b_j + \sum_{i\in I} m_i^1b_i+\sum_{j\in J} m_j^0 a_j \\
&=\sum_{i\in I} a_i m_i^1 - \sum_{j\in J} b_j m_j^0 -\sum_{i\in I} b_i m_i^1 + \sum_{j\in J} a_j m_j^0.
\end{align*}	
\end{remark}

\section{Super projective modules}
\label{section:03}
The class of projective modules expands the class of free modules over a ring, by holding onto some of the main properties of free modules. 
In this section, we define super projective modules analogous to projective modules in ordinary algebra and discuss some of their properties and examples.

\begin{theorem}
\label{theorem:3.1}
Let $R$ be a super commutative ring.  Let $P$ be a supermodule over $R$. Then the following are equivalent:
\begin{enumerate}
\item A supermodule $P$ over $R$ is a direct summand of free supermodule, that is, there exists supermodule $Q$ over $R$ such that $P\oplus Q$ is isomorphic to $F$, where $F$ is of type $(I, J)$ for some $I, J$ arbitrary sets.
\item For any $R$-homomorphism $h:P \to N$, and a surjective morphism $g\in \mathrm{Hom}_{R\text{-}\mathrm{SMod}}(M,N)$, there exists a unique $\widetilde{h}:P\to M$ a $R$-homomorphism such that $g\circ \widetilde{h}=h$, that is, the following diagram commutes.
$$\begin{tikzcd}
                  & P \arrow[ld, "!\exists \widetilde{h}"', dashed] \arrow[d, "h"] &   \\
M \arrow[r, "g"'] & N \arrow[r]                                                    & 0
\end{tikzcd}$$
\item If $\begin{tikzcd} 0 \arrow[r] & E_{1} \arrow [r,"h"] & E_{2} \arrow[r,"g"] & P \arrow[r] & 0 
\end{tikzcd}$ is a short exact sequence of supermodules over $R$, then it splits, that is, there exists $s:P\to E_2$ such that $g\circ s=\mathrm{Id}_P$. 
\end{enumerate}
\end{theorem}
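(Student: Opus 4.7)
The plan is to prove the cycle $(1)\Rightarrow (2)\Rightarrow (3)\Rightarrow (1)$, closely mirroring the classical equivalence of the usual characterizations of projective modules, with the universal property of free supermodules (Theorem~\ref{theorem:2.12}) taking the place of its ungraded counterpart.

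For $(1)\Rightarrow(2)$, I would write $P\oplus Q\cong F$ with $F$ free of type $(I,J)$ and basis $\mathcal{B}=\{f_i^0,f_j^1\}$, and denote the canonical inclusion and projection by $\iota: P\to F$ and $\pi: F\to P$. Given $h: P\to N$ and a surjective $R$-linear map $g: M\to N$, I would define a set map $\phi: \mathcal{B}\to M$ by choosing, for each even basis element $f_i^0$, an even preimage $m_i^0\in M_0$ with $g(m_i^0)=h(\pi(f_i^0))$, and symmetrically an odd preimage $m_j^1\in M_1$ for each $f_j^1$. Theorem~\ref{theorem:2.12} then promotes $\phi$ to an $R$-linear morphism $\widetilde{H}: F\to M$, and I would set $\widetilde{h}=\widetilde{H}\circ\iota$. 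The identity $g\circ\widetilde{h}=h$ follows from $g\circ\widetilde{H}$ and $h\circ\pi$ agreeing on $\mathcal{B}$, hence on all of $F$ by the uniqueness clause in Theorem~\ref{theorem:2.12}, together with $\pi\circ\iota=\mathrm{Id}_P$.

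The step $(2)\Rightarrow(3)$ is a direct application: for a short exact sequence $0\to E_1\to E_2\xrightarrow{g}P\to 0$, apply (2) with $h=\mathrm{Id}_P$, $N=P$, and $M=E_2$ to obtain a morphism $s: P\to E_2$ satisfying $g\circ s=\mathrm{Id}_P$, which is the desired splitting. For $(3)\Rightarrow(1)$, I would first exhibit a surjection onto $P$ from a free supermodule: take $I=P_0$ and $J=P_1$ (or any homogeneous generating sets), form the free supermodule $F$ of type $(I,J)$ with basis $\mathcal{B}=\{f_p^0, f_q^1: p\in P_0,\, q\in P_1\}$, and apply Theorem~\ref{theorem:2.12} to the set map sending $f_p^0\mapsto p$ and $f_q^1\mapsto q$. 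The resulting $R$-linear morphism $\widetilde{\phi}: F\to P$ is manifestly surjective, so the short exact sequence $0\to\Ker{\widetilde{\phi}}\to F\to P\to 0$ splits by (3), yielding $F\cong P\oplus\Ker{\widetilde{\phi}}$, which is precisely (1).

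The main obstacle, modest but genuine, is the degree bookkeeping in $(1)\Rightarrow(2)$: one needs that a surjective even $R$-linear morphism $g: M\to N$ restricts to surjections $M_0\to N_0$ and $M_1\to N_1$, so that \emph{homogeneous} preimages of basis elements can be chosen to match the parity of each $f_i^0$ or $f_j^1$. This follows from $g(M_\alpha)\subseteq N_\alpha$ combined with decomposing any preimage $m=m_0+m_1$ of $n\in N_\alpha$ and comparing gradings. The remaining checks that $\widetilde{h}$, $s$, and $\widetilde{\phi}$ are $R$-linear of the expected degree reduce to routine applications of the universal property.
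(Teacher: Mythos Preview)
Your proposal is correct and follows essentially the same cycle $(1)\Rightarrow(2)\Rightarrow(3)\Rightarrow(1)$ as the paper, using Theorem~\ref{theorem:2.12} at the same junctures and the same choice $I=P_0$, $J=P_1$ in the last step. The only notable differences are cosmetic: the paper lifts basis images to arbitrary (not necessarily homogeneous) preimages $m_i,m_j\in M$---which is permitted since Theorem~\ref{theorem:2.12} accepts any set map $\phi:\mathcal{B}\to M$---and then verifies $g\circ\widetilde{h}=h$ by an explicit element computation rather than invoking uniqueness, so your parity bookkeeping, while correct, is not actually needed here.
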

\begin{proof} Let $\iota:P\hookrightarrow F$ be an inclusion map, and $\pi:F\to P$ a projection map. Hence, $\pi\circ \iota=\mathrm{Id}_P$, where $\mathrm{Id}_P$ is the identity map. Let $F= \bigl(\oplus_{i\in I} f_i^0 R \bigr) \oplus \bigl(\oplus_{j\in J} f_j^1 R \bigr)$. Let $h\circ \pi(f_i^0)=n_i \in N$.  Similarly, $h\circ \pi(f_j^1)=n_j\in N$. Since $g$ is surjective there exists $m_i \in M$ such that $g(m_i)=n_i$ and $m_j\in M$ such that $g(m_j)=n_j$. By the universal property of free supermodules over $R$ (Theorem \ref{theorem:2.12}), we get a morphism of supermodules $k:F\to M$, such that $k(f_i^0)=m_i$, and $k(f_j^1)=m_j$. Let $\widetilde{h}=k\circ \iota:P\to M$ be the required map. To prove the claim we need to check $g\circ \widetilde{h}=h$.  Let $p\in P$, be given by 
$p=\sum_{i\in I} f_i^0 a_i + \sum_{j\in J} f_j^1 a_j$,  $a_i$ and $a_j \in R$,  
$a_i=0$ (respectively, $a_j = 0$) for all but finitely many $i\in I$ (respectively, $j\in J$). Then, 
\begin{align*}
g\circ \widetilde{h}(p)&=g\circ k \circ \iota (p)
=g\circ k \left(\sum_{i\in I} f_i^0 a_i + \sum_{j\in J} f_j^1 a_j \right)\\
&=g\left(\sum_{i\in I}m_i  a_i + \sum_{j\in J} m_j a_j \right) 
=\sum_{i\in I} n_i a_i + \sum_{j\in J} n_j a_j.
\end{align*} 
On the other hand, 
\begin{align*}
h(p)&= h\left(\sum_{i\in I} f_i^0 a_i + \sum_{j\in J} f_j^1 a_j \right) 
= h\circ \pi\circ \iota \left(\sum_{i\in I} f_i^0 a_i + \sum_{j\in J} f_j^1 a_j \right)\\
&= h\left(\sum_{i\in I} \pi(f_i^0) a_i + \sum_{j\in J} \pi(f_j^1) a_j \right)
= \sum_{i\in I} h \circ \pi(f_i^0) a_i + \sum_{j\in J} h \circ \pi(f_j^1) a_j \\
&=\sum_{i\in I} n_i a_i + \sum_{j\in J} n_j a_j.
\end{align*}
Hence $g\circ \widetilde{h}=h$. Therefore (1) implies (2). 
Let us now prove (2) implies (3).  For this $\begin{tikzcd} 0 \arrow[r] & E_1\arrow[r, "h"] & E_2 \arrow[r, "g"] & P\arrow[r] & 0\end{tikzcd}$.  Let $\mathrm{Id}_P:P\to P$, then by $(2)$ there exists $s$  such that $g\circ  s=\mathrm{Id}_P$.  Hence, $s$ is a section, that is, the exact sequence splits.
To prove (3) implies (1),  let $I=P_0$ and $J=P_1$, where  $P=P_0\oplus P_1$.  Consider a free supermodule over $R$ of the type $(I,J)$, defined by $$F=\sum_{p^0\in P_0} p^0 R\oplus \sum_{p^1\in P_1} p^1 R.$$
Let $g: F\to P$ be a morphism such that $p^0\mapsto p^0$, and $p^1\mapsto p^1$, obtained by the universal property of free supermodules. This gives us the following exact sequence,  $$\begin{tikzcd}
0 \arrow[r] & \text{Ker } g \arrow[r, "\iota", hook] & F \arrow[r, "g"] & P \arrow[r] & 0
\end{tikzcd}.$$  By (3) we get a section $s:P\to F$, such that $g\circ s=\mathrm{Id}_P$, 
$$\begin{tikzcd}
0 \arrow[r] & \text{Ker } g \arrow[r, "\iota", hook] & F \arrow[r, "g", bend left] & P \arrow[l, "s", bend left] \arrow[r] & 0
\end{tikzcd}.$$  Using the above exact sequence we will show that $F\cong P\oplus \mathrm{Ker} g$.  Define $\phi:F\to P\oplus\mathrm{Ker} g$, $x\mapsto (g(x), x-s\circ g(x))$.  Then $\phi^{-1}:P\oplus \mathrm{Ker} g\to F$, is given by $(p,h)\mapsto s(p)+h$.  It is easy to check that $\phi$, and $\phi^{-1}$ are $R$-homomorphisms. 
\end{proof}

\begin{definition}
\label{definition:3.2}
A supermodule $P$ over $R$ is called \emph{super projective} if it satisfies any of the equivalent conditions of Theorem \ref{theorem:3.1}.
\end{definition}

\begin{example}
\label{example:3.3}
Let  $R = \mathbb{Z}_6[\xi_1, \xi_2]$ be a super commutative ring, where $\xi_1$ and $\xi_2$ are odd variables, that is, $\xi_1 \xi_2 = -\xi_2 \xi_1$ and $\xi_1^2 = \xi_2^2 = 0$. We know that $R$ is a supermodule over $R$. Let  $P = \mathbb{Z}_2[\xi_1, \xi_2]$ be a super submodule of $R$. We claim that $R$ is a super projective module over $R$ which is not free supermodule. It is easy to see that $P \oplus \mathbb{Z}_3[\xi_1, \xi_2] = R$. Therefore, $P$ is super projective module over $R$. But $P$ is not free supermodule since any free supermodule of $\mathbb{Z}_6[\xi_1, \xi_2]$	has cardinality as power of $6^4$ whereas cardinality of $P$ is $2^4$.
\end{example}

We can construct new super projective modules using direct sums and tensor products. The following proposition gives a way to construct new projective modules using the old ones. 

\begin{proposition}
\label{proposition:3.4}
Let $P$, $P_1$ and $P_2$ be super projective modules over $R$. Then,
\begin{enumerate}
\item the tensor product of $P_1$ and $P_2$ is super projective module over $R$.
\item the direct sum of $P_1$ and $P_2$ is super projective module over $R$.
\item the module of endomorphisms of $P$ over $R$, $\mathrm{End}_R (P)$ is a super projective module over $R$.
\end{enumerate}
\end{proposition}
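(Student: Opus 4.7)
The strategy for all three parts is to use condition (1) of Theorem \ref{theorem:3.1}: a supermodule is super projective precisely when it is a direct summand of a free supermodule. Choose supermodules $Q_1, Q_2, Q$ and free supermodules $F_1, F_2, F$ of suitable types $(I_s, J_s)$ such that $P_i \oplus Q_i \cong F_i$ and $P \oplus Q \cong F$. The goal for each item is to realise the newly constructed module as a direct summand of some free supermodule, using closure of the class of free supermodules under $\oplus$ and $\otimes_R$.

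\textbf{Parts (1) and (2).} Part (2) is almost immediate: $(P_1 \oplus P_2) \oplus (Q_1 \oplus Q_2) \cong F_1 \oplus F_2$, and the direct sum of two free supermodules of types $(I_1, J_1)$ and $(I_2, J_2)$ is free of type $(I_1 \sqcup I_2,\, J_1 \sqcup J_2)$, as one sees by concatenating the homogeneous bases. For part (1), distributivity of $\otimes_R$ over $\oplus$ yields
$$F_1 \otimes_R F_2 \cong (P_1 \otimes_R P_2) \oplus (P_1 \otimes_R Q_2) \oplus (Q_1 \otimes_R P_2) \oplus (Q_1 \otimes_R Q_2),$$
so $P_1 \otimes_R P_2$ is a direct summand of $F_1 \otimes_R F_2$. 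It remains to verify that $F_1 \otimes_R F_2$ is itself free: if $\{f_i^0, f_j^1\}$ and $\{g_k^0, g_l^1\}$ are the given homogeneous bases of $F_1$ and $F_2$, then the pure tensors $f^\alpha \otimes g^\beta$ form a homogeneous basis of $F_1 \otimes_R F_2$, with $f^\alpha \otimes g^\beta$ of parity $\alpha + \beta$ in $\mathbb{Z}_2$. This is a routine check using the universal property of Theorem \ref{theorem:2.12}.

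\textbf{Part (3) and the main obstacle.} Write $F = P \oplus Q$ via inclusion $\iota: P \hookrightarrow F$ and projection $\pi: F \to P$ with $\pi \circ \iota = \mathrm{Id}_P$. Define $R$-linear maps
$$\Psi : \mathrm{End}_R(P) \longrightarrow \mathrm{End}_R(F),\quad \psi \mapsto \iota \circ \psi \circ \pi, \qquad \Phi : \mathrm{End}_R(F) \longrightarrow \mathrm{End}_R(P),\quad \phi \mapsto \pi \circ \phi \circ \iota.$$
Then $\Phi \circ \Psi = \mathrm{Id}_{\mathrm{End}_R(P)}$, so $\mathrm{End}_R(P)$ is a direct summand of $\mathrm{End}_R(F)$. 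The substantive step, and the main obstacle, is to recognise $\mathrm{End}_R(F)$ as super projective. When $F$ is of finite type this is clear, since $\mathrm{End}_R(F)$ is then the super matrix algebra $M_{(m|n)}(R)$, which is itself a free supermodule. A cleaner route when $P$ is finitely generated is to use the natural evaluation isomorphism $P \otimes_R P^* \cong \mathrm{End}_R(P)$, where $P^* := \mathrm{Hom}_R(P, R)$ is super projective as a direct summand of the free supermodule $F^*$ (dual to a finite-type free supermodule), and then invoke part (1). The argument therefore carries through cleanly under a finite-generation hypothesis, which matches the examples of interest in later sections.
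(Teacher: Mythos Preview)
Your arguments for parts (1) and (2) are correct and coincide with the paper's: write each $P_i$ as a summand of a free supermodule $F_i$, then use closure of free supermodules under direct sum and tensor product.

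For part (3) the overall strategy also matches the paper's---exhibit $\mathrm{End}_R(P)$ as a summand of $\mathrm{End}_R(F)$ and then claim $\mathrm{End}_R(F)$ is free---but you are more careful on two points where the paper is terse or imprecise. First, the paper writes $\mathrm{End}_R(F) = \mathrm{End}_R(P) \oplus \mathrm{End}_R(Q)$, which drops the cross-terms $\mathrm{Hom}_R(P,Q)$ and $\mathrm{Hom}_R(Q,P)$; your explicit retraction $\Phi\circ\Psi = \mathrm{Id}$ is the correct way to extract $\mathrm{End}_R(P)$ as a summand. Second, the paper simply asserts that $\mathrm{End}_R(F)$ is free, whereas you correctly flag that this needs $F$ to be of finite type (for infinite-rank $F$ the endomorphism module is generally not free, or even projective). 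Your alternative route via the evaluation isomorphism $P\otimes_R P^{*}\cong \mathrm{End}_R(P)$ and part (1) is a clean way to finish under the finite-generation hypothesis, which is indeed implicit in the paper's argument and suffices for all the applications in Section~\ref{section:04}.
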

\begin{proof} 
Since $P_1$ and $P_2$ are super projective modules over $R$, there exist $Q_1$, $Q_2$ supermodules such that $P_1 \oplus Q_1 = R^{(I_1, J_1)}$ and $P_2 \oplus Q_2 = R^{(I_2, J_2)}$, for some indexing sets, $I_1$, $I_2$, $J_1$ and $J_2$.

We will first prove that the tensor product of $P_1$ and $P_2$ is a super projective module over $R$.  Consider the tensor product of free supermodules, $ R^{(I_1, J_1)}\otimes  R^{(I_2, J_2)}= (P_1 \oplus Q_1)\otimes (P_2\oplus Q_2)$. Since the tensor product and direct sum commutes, we get 
\begin{align*}
(P_1 \otimes P_2) \oplus \bigr((P_1 \otimes Q_2) \oplus (Q_1 \otimes P_2) \oplus (Q_1 \otimes Q_2)\bigl) &= R^{(I_1, J_1)} \otimes R^{(I_2, J_2)} \\
&= R^{\bigl( (I_1 I_2 + J_1 J_2), (I_1 J_2 + J_1 I_2) \bigl)}.
\end{align*}
Therefore, $P_1 \otimes P_2$ is a super projective module over $R$.  

Next, we prove that the direct sum of $P_1$ and $P_2$ is a super projective module over $R$. Let $P_1$ and $P_2$ be as defined above. Then, $P_1 \oplus Q_1 = R^{(I_1, J_1)}$ and $P_2 \oplus Q_2 = R^{(I_2, J_2)}$. By taking the direct sum of these two we get, 
$$\bigr( (P_1 \oplus P_2) \oplus (Q_1 \oplus Q_2) \bigl) =  R^{(I_1, J_1)} \oplus R^{(I_2, J_2)} =  R^{(I_1 + I_2, J_1 + J_2)}.$$
Therefore, $P_1 \oplus P_2$ is a super projective module over $R$.

Let $P\oplus Q=F$, where $F$ is a free supermodule, and $Q$ a supermodule.  We know that $\mathrm{End}_R (F) = \mathrm{End}_R (P) \oplus \mathrm{End}_R (Q)$. Since $\mathrm{End}_R(F)$ is free, we get $\mathrm{End}_R (P)$ is a super projective module. \end{proof}

\section{Superspheres and super projective modules}
\label{section:04}
In this section we give two classes of super projective modules. The first class is obtained from the global sections of tangent bundle over a supersphere, that is, the module of vector fields over a supersphere. The other class of super projective modules is obtained from projectors over the supersphere.  This example is motivated by physics. The supersphere is a \emph{supermanifold}. Since there are many kinds of supermanifolds, we will give the definition of supermanifold which we will be using in this paper.

\subsection{Supermanifolds}
\label{subsection:supermanifold}
To find the super analog for this example, we need to study the supersphere and the tangent bundle over it. Since the supersphere is a supermanifold, we recall certain key results for supermanifolds in general. There are two main ways to deal with these and the one which we will be considering is the \emph{concrete approach}, where a supermanifold is a topological space, with charts to a \textit{real superspace} that are \textit{supersmooth}. This approach was developed greatly by Rogers \cite[pp.~51]{Rog2007}. The other approach treats supermanifolds as ringed spaces with a sheaf of \textit{supersmooth functions} (refer to \cite{BBH1991}). To make this article self-contained, we are giving definitions of $G^\infty$ DeWitt supermanifold. But for details, we suggest readers refer to the above-cited books, namely \cite{Rog2007} and \cite{BBH1991}.

The \emph{real superspace} is defined as 
$$\mathbb{R}_{S}^{m,n}=\underbrace{\mathbb{R}_{S_{0}} \times \cdots \times \mathbb{R}_{S_{0}}}_{m \text { copies }} \times \underbrace{\mathbb{R}_{S_{1}} \times \cdots \times \mathbb{R}_{S_{1}}}_{n \text { copies }},$$
where $\mathbb{R}_S$, $\mathbb{R}_{S_{0}}$ and $\mathbb{R}_{S_{1}}$ are as defined in Example \ref{example:2.5}.
An element in $\mathbb{R}_{S}^{m,n}$ is typically written as $(x,\xi)= (x_1,x_2,\dots,x_m,\xi_{1},\xi_{2},\dots,\xi_{n})$. One can check $\mathbb{R}_{S}^{m,n}$ is super commutative.  There is a canonical map $\epsilon:\mathbb{R}_S\to\mathbb{R}_S$  defined on the generators of $\mathbb{R}_S$ as $\epsilon(\beta_{[i]})=0$ and 
$\epsilon(1)=1$. This is called the \emph{body map} \cite[pp.~19]{Rog2007}.  Let $\sigma: \mathbb{R}_{S}\rightarrow \mathbb{R}_{S}$ be defined as $\sigma(x)=x-\epsilon(x)$. This is known as the \emph{soul map} \cite[pp.~19]{Rog2007}.
The body map can be extended to the real space as follows:
The \emph{$(m,n)$ body map} is defined as $\epsilon^{m,n}: \mathbb{R}_{S}^{m,n}\rightarrow \mathbb{R}^{m}$ with $$\epsilon^{m,n}(x,\xi)= \bigl(\epsilon(x_{1}),\epsilon(x_{2}),\dots,\epsilon(x_{m})\bigr)$$

Consider $U\subset \mathbb{R}_{S}^{m,n}$. This set is open in \emph{DeWitt topology} if and only if there exists an open set $V\subset \mathbb{R}^{m}$ such that $(\epsilon^{m,n})^{-1}(V)= U$.  

As in traditional manifolds, there needs to be a concept analogous to smooth functions on $\mathbb{R}^{m}$. These are named \emph{supersmooth functions} on $\mathbb{R}_{S}^{m,n}$. There are several classes of such functions, which are described in \cite[Chapter 4]{Rog2007}. Most of these functions are defined as a series of smooth functions which are continued analytically from $\mathbb{R}^{m}$ to $\mathbb{R}^{m,0}$ and then to the $(m,n)$-superspace. Recall that, if $V\subset \mathbb{R}^{m}$ then $f:V\rightarrow E$, where $E$ is a vector space over $\mathbb{R}$, is smooth if $f=\sum f_{i}e_{i} \in C^{\infty}(V,E)$, where $\{e_{i}\}$ is some  basis of $E$ and $f_{i}:V\rightarrow \mathbb{R}$ are smooth.

\begin{definition}
\label{def:4.1}
Suppose $V \subset \mathbb{R}^{m}, U \subset \mathbb{R}_{S}^{m,n}$ such that $\epsilon^{m,n }(U)=V$.\\
Define $$\widehat{f}(x, \xi):=\sum_{M_{[\infty]}} \frac{1}{i_{1}! \ldots i_{m} !} \partial_{1}^{i_{1}} \cdots \partial_{m}^{i_{m}} f\left(\epsilon^{m,n}(x)\right) \sigma\left(x_{1}\right)^{i_{1}} \ldots \sigma\left(x_{m}\right)^{i_{m}}$$
where $f \in C^{\infty}\left(V, \mathbb{R}_{S}\right)$. We call $\widehat{f}$ the \emph{Grassmann analytic continuation of $f$} \cite[Definition 4.2.2, pp.~36]{Rog2007}.
\end{definition}

Using the above definition, we say, given $U$ open in $\mathbb{R}_{S}^{m,n}$, a function $f: U \rightarrow \mathbb{R}_{S}$ is said to be \emph{$G^{\infty}$ on $U$} if and only if there exists a collection $\left\{f_{\underline{\mu}} \mid \underline{\mu} \in M_{[n]}\right\}$ of $C^{\infty}\left(\epsilon^{m,n}(U),\mathbb{R}_{S}\right)$ functions such that
$$f(x, \xi)=\sum_{\underline{\mu} \in M_{[n]}} \widehat{f_{\underline{\mu}}}(x) \xi_{\underline{\mu}}$$
for each $(x, \xi)$ in $U$. Here $\underline{\mu}$ is a multi index, and $M_{[n]}$ is the set of multi indices as defined in Notation \ref{nota:2.3}, $\{\underline{\mu}=\mu_{1}\mu_{2}\dots\mu_{k}\mid 1\leq \mu_{1}< \mu_{2}< \cdots < \mu_{k}\leq n\}$, $\xi_{\underline{\mu}}=\xi_{\mu_{1}}\xi_{\mu_{2}}\dots \xi_{\mu_{k}}$. This expansion is called the \emph{Grassmann analytic expansion of $f$}, and the functions $f_{\underline{\mu}}$ are called the \emph{Grassmann analytic coefficients of $f$} \cite[Definition 4.3.2, pp.~39]{Rog2007}.
Finally, we define a `concrete' supermanifold.
\begin{definition}[$(\mathbb{R}_{S}^{m, n},DeWitt,G^{\infty})$-Supermanifold]
Let $X$ be a set, and $\mathbb{R}_{S}^{m, n}$ be equipped with the DeWitt topology. For $U\subset X$, define charts to be the pair $(U, \phi_{U})$ where $\phi_{U} : U \rightarrow \widehat{U}$ is a bijective map for some open $\widehat{U}\subset\mathbb{R}_{S}^{m,n}$. Suppose there exists an atlas, i.e.  $\mathcal{A} := \{ \phi_{U_{\alpha}}\lvert U_{\alpha}\subset X, \bigcup_{\alpha} U_{\alpha}=X \}$ and suppose $\phi_{U} \circ \phi_{V}^{-1}$ is $G^{\infty}$ on the intersection of any two charts in the atlas, then $X$ is a $(\mathbb{R}_{S}^{m, n},DeWitt,G^{\infty})$-supermanifold. 
\end{definition}

\begin{remark}
\label{remark:4.2} We can also define a supermanifold where topological space is locally homeomorphic to $\mathbb{R}_{S[L]}^{m,n} = \mathbb{R}_{S[L,0]}^{m}\oplus \mathbb{R}_{S[L,1]}^{n}$. We can define DeWitt topology and $G^\infty$ function similar to Definition \ref{def:4.1}.
\end{remark}

\begin{proposition}
\label{proposition:4.3} The \emph{supercircle} $S^{1,0}=\{(x,y) \in \mathbb{R}_{S}^{2,0} \vert  x^2 + y^2 = 1\}$  is a $(1,0)$-dimensional supermanifold.
\end{proposition}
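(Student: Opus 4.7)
The plan is to adapt the standard four-chart atlas on $S^1 \subset \mathbb{R}^2$. Since $\epsilon$ is a ring homomorphism, applying $\epsilon^{2,0}$ to $x^2 + y^2 = 1$ gives $\epsilon(x)^2 + \epsilon(y)^2 = 1$; thus $\epsilon^{2,0}$ restricts to a surjection $S^{1,0} \to S^1$. I will take the preimages of the four classical open half-circles as chart domains and project onto a single coordinate.

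First, set
$$U_1 = \{(x,y) \in S^{1,0} : \epsilon(x) > 0\}, \quad U_2 = \{(x,y) \in S^{1,0} : \epsilon(x) < 0\},$$
$$U_3 = \{(x,y) \in S^{1,0} : \epsilon(y) > 0\}, \quad U_4 = \{(x,y) \in S^{1,0} : \epsilon(y) < 0\},$$
which cover $S^{1,0}$ because their bodies cover $S^1$. Define $\phi_1, \phi_2 : U_1, U_2 \to \mathbb{R}_{S_0}$ by $(x,y) \mapsto y$ and $\phi_3, \phi_4 : U_3, U_4 \to \mathbb{R}_{S_0}$ by $(x,y) \mapsto x$. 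The image of each is $W = (\epsilon^{1,0})^{-1}((-1,1))$, which is DeWitt open in $\mathbb{R}_S^{1,0} = \mathbb{R}_{S_0}$ by definition of that topology.

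Second, I would construct inverses via Grassmann analytic continuation. Let $g : (-1,1) \to \mathbb{R}$ be $g(t) = \sqrt{1-t^2}$, and let $\widehat{g}$ be its continuation from Definition \ref{def:4.1}. For $x \in W$, since $f \mapsto \widehat{f}$ is an algebra homomorphism on smooth functions and $g(t)^2 + t^2 = 1$, one obtains $\widehat{g}(x)^2 + x^2 = 1$; moreover $\epsilon(\widehat{g}(x)) = g(\epsilon(x)) > 0$. Hence $\phi_3^{-1}(x) = (x, \widehat{g}(x))$ is well-defined, and uniqueness holds because any $y \in \mathbb{R}_{S_0}$ with $y^2 = 1 - x^2$ is $\pm \widehat{g}(x)$ and only the $+$ sign has positive body. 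The remaining inverses are $\phi_4^{-1}(x) = (x, -\widehat{g}(x))$, $\phi_1^{-1}(y) = (\widehat{g}(y), y)$, and $\phi_2^{-1}(y) = (-\widehat{g}(y), y)$.

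Finally, since $U_1 \cap U_2$ and $U_3 \cap U_4$ are empty, only four overlaps need checking. On $U_1 \cap U_3$, $\phi_1 \circ \phi_3^{-1}(x) = \widehat{g}(x)$; the other three overlaps give $\pm \widehat{g}$ analogously, with domains of the form $(\epsilon^{1,0})^{-1}(J)$ for an open interval $J \subset (-1,1)$, hence DeWitt open. Being Grassmann analytic continuations of smooth functions, the transitions are $G^\infty$ by definition, and the atlas $\{(U_i,\phi_i)\}_{i=1}^{4}$ fulfils the definition. The only genuine content is the inverse construction: one must verify that $\widehat{\cdot}$ respects the relation $g^2 + t^2 = 1$ and that the defining series makes sense. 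The latter is automatic because for each $x \in \mathbb{R}_{S_0}$ the soul $\sigma(x)$ lies in a subalgebra generated by finitely many $\beta_{[k]}$ and is nilpotent, truncating the sum in Definition \ref{def:4.1} to finitely many terms.
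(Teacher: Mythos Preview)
Your atlas is exactly the paper's: four charts indexed by the sign of $\epsilon(x)$ or $\epsilon(y)$, each projecting onto the remaining coordinate, with inverse given by a square root. The only methodological difference is in how the square root $\sqrt{1-y^2}$ is built. The paper writes $x=\sum_{\underline{\lambda}}x_{\underline{\lambda}}\beta_{[\underline{\lambda}]}$ and solves for the coefficients $x_{\underline{\lambda}}$ by induction on $l(\underline{\lambda})$, using the recursion coming from $x^2+y^2=1$. You instead invoke the Grassmann analytic continuation $\widehat{g}$ of $g(t)=\sqrt{1-t^2}$ from Definition~\ref{def:4.1} and use that $f\mapsto\widehat{f}$ is multiplicative. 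Your route is cleaner because it recycles the existing machinery and makes the $G^\infty$ property of the transitions immediate; the paper's route is more hands-on but makes explicit that the construction terminates at each coefficient.

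There is, however, a genuine error in your last sentence. You claim that for $x\in\mathbb{R}_{S_0}$ the soul $\sigma(x)$ lies in a subalgebra generated by finitely many $\beta_{[k]}$ and is therefore nilpotent, so the series in Definition~\ref{def:4.1} truncates. This is false for $\mathbb{R}_S$: elements are \emph{formal} infinite sums, and Example~\ref{example:2.6} exhibits an even element with zero body that is not nilpotent. Your convergence argument therefore collapses. The fix is a degree count: every monomial in $\sigma(x)$ has $\beta$-length at least $2$, so every monomial in $\sigma(x)^k$ has $\beta$-length at least $2k$; hence for any fixed multi-index $\underline{\lambda}$ only the terms with $2k\le l(\underline{\lambda})$ contribute to the coefficient of $\beta_{[\underline{\lambda}]}$ in the series, and the sum is well defined coefficientwise. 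This is precisely the mechanism behind the paper's inductive construction, and once you replace the nilpotency claim by this argument your proof goes through.
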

\begin{proof} consider a map $\phi: \mathbb{R}_{S}^{2,0} \longrightarrow \mathbb{R}_{S}$ given by $\phi (x , y) = x^2 + y^2$, where
\begin{equation}
\label{eq:20}
x= \sum_{\underline{\lambda}=\lambda_{1}\lambda_{2}\dots \lambda_{k} \in M_{0}} x_{\underline{\lambda}}\beta_{[\lambda_{1}]}\beta_{[\lambda_{2}]}\dots\beta_{[\lambda_{k}]},
\, y=\sum_{\underline{\lambda}=\lambda_{1}\lambda_{2}\dots \lambda_{k} \in M_{0}} y_{\underline{\lambda}}\beta_{[\lambda_{1}]}\beta_{[\lambda_{2}]}\dots\beta_{[\lambda_{k}]},
\end{equation}
where $x_{\underline{\lambda}},\, y_{\underline{\lambda}}\in \mathbb{R}$. Then, $S^{1,0}= \phi^{-1}(1)$, that is, $S^{1,0}$ is a level set.
The equation $x^2 + y^2 = 1$ of the supercircle gives relation between $x_{\underline{\lambda}}$ and $y_{\underline{\lambda}}$. For this let us compare the constant term of the equation.  We get the first equation as 
$x_{\underline{0}} ^2 + y_{\underline{0}}^2 = 1.$ 
By induction we can see that for a multi index $\underline{\lambda}=\lambda_1\lambda_2 \ldots \lambda_k$,
\begin{equation} 
\label{eq:21}
x_{\underline{0}} x_{\underline{\lambda}} + y_{\underline{0}} y_{\underline{\lambda}} + \Sigma_{\underline{\mu},\underline{\nu}} \delta_{\underline{\mu}\underline{\nu}}(x_{\underline{\mu}} x_{\underline{\nu}} +  y_{\underline{\mu}} y_{\underline{\nu}})= 0,
\end{equation}
where, $\underline{\mu}=\mu_{1}\mu_{2}\dots \mu_{r}$, $\underline{\nu}=\nu_{1}\nu_{2}\dots \nu_{s}$, such that $\{\mu_1,\ldots,\mu_r,\nu_1,\ldots,\nu_s\}=\{\lambda_1,\ldots,\lambda_k\}$ and $\{\mu_1,\ldots,\mu_r\}\cap\{\nu_1,\ldots,\nu_s\}=\emptyset$, and $\delta{\underline{\mu}\underline{\nu}}$ is $1$ or $0$ depending upon the sign of $\underline{\mu}\underline{\nu}$.
Now we are ready to prove $S^{1,0}$ is a supermanifold.  For this, we need to find an atlas. Consider the map
\begin{align*}
\label{eq:22}
\psi_{x^+}: U_{x^+} = \{(x,y) \in S^{1,0} \vert \epsilon(x) = x_{\underline{0}} > 0\} &\longrightarrow (\epsilon^{1,0})^{-1}((-1,1))\\
(x,y) &\longmapsto x,
\end{align*}
where $(-1,1)$ is the open interval in $\mathbb{R}$. The inverse map $\psi ^{-1}_{x^+}:(\epsilon^{1,0})^{-1}((-1,1))\to U_{x^+}$ is $y\mapsto (\sqrt{1-y^2} , y)$. Here $\sqrt{1-y^2}$ is defined as follow: Let $y$ and $\sqrt{1-y^2}=x$ be as in \eqref{eq:20}. We need to find $x_{\underline{\lambda}}$ for all multi index $\underline{\lambda}$.  We will do this by induction on the length of $\underline{\lambda}$.  When $l(\underline{\lambda})=0$, then $x_{\underline{0}}=\sqrt{1-y_{\underline{0}}^2}$. Note that, since $y_{\underline{0}}\neq \pm 1$, $x_{\underline{0}}\neq 0$. By the induction hypothesis, let us assume that $x_\mu$ is known for $l(\mu)< k$.  By \eqref{eq:21} 
$$x_{\underline{\lambda}}=\frac{-1}{x_{\underline{0}}}(y_{\underline{0}} y_{\underline{\lambda}} + \Sigma_{\underline{\mu},\underline{\nu}} \delta_{\underline{\mu}\underline{\nu}}(x_{\underline{\mu}} x_{\underline{\nu}} +  y_{\underline{\mu}} y_{\underline{\nu}})).$$
Hence by the mathematical induction, $\sqrt{1-y^2}$ is defined. Note that since $y\in \mathbb{R}_{S_{0}}$, so is $x$. By construction $\psi_{x^+}$ is $G^\infty$.  Hence $(U_{x^+},\psi_{x^+})$ is a chart. Now consider the inverse of $\psi_{x^-}$ is given by $y\mapsto(-\sqrt{1-y^2},y)$, where $\sqrt{1-y^2}$ is as defined above. Hence, we get a chart $(U_{x^-},\psi_{x^-})$.  Similarly we define charts $(U_{y^+},\psi_{y^+})$ and $(U_{y^-},\psi_{y^-})$.
These four charts will give us an atlas, which shows that $S^{1,0}$ is a supermanifold of dimension $(1,0)$. 
\end{proof}

\begin{proposition}
\label{proposition:4.4}
Let $S^{1,0}$ be the supercirlce. Then the tangent bundle over $S^{1,0}$ is trivial.
\end{proposition}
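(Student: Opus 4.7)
The plan is to trivialize $TS^{1,0}$ by exhibiting a single nowhere-vanishing even global vector field; since the supercircle has dimension $(1,0)$, its tangent bundle is a rank $(1,0)$ line bundle, so any such section is automatically a global frame and the tangent bundle is trivial.

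The natural candidate is the super analog of the angular field on the ordinary circle, namely the restriction to $S^{1,0}$ of the ambient vector field $X=-y\,\partial_x+x\,\partial_y$ on $\mathbb{R}_S^{2,0}$. First I would verify that $X$ is tangent to $S^{1,0}$ by computing $X(x^2+y^2)=-y(2x)+x(2y)=0$, so $X$ preserves every level set of $\phi(x,y)=x^2+y^2$, in particular $S^{1,0}=\phi^{-1}(1)$. Next I would express $X$ in each of the four charts constructed in Proposition \ref{proposition:4.3}. On $U_{x^\pm}$ with coordinate $y$ and parametrization $(\pm\sqrt{1-y^2},y)$, the chain rule identifies $\partial_y$ with the ambient vector $(-y/x,1)$, whence $X|_{U_{x^\pm}}=x\,\partial_y$. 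A symmetric computation on $U_{y^\pm}$ with coordinate $x$ and parametrization $(x,\pm\sqrt{1-x^2})$ gives $X|_{U_{y^\pm}}=-y\,\partial_x$.

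To conclude, it suffices to check that the coefficient is a unit in the algebra of $G^\infty$ functions on each chart. By construction the body of $x$ on $U_{x^\pm}$ is everywhere strictly positive or strictly negative, and likewise for $y$ on $U_{y^\pm}$. The main technical point is therefore that a $G^\infty$ function on a DeWitt-open set whose body is nowhere zero is invertible in the ring of $G^\infty$ functions. Pointwise invertibility in $\mathbb{R}_S$ follows from the fact that for $a=a_{\underline{0}}+\sigma(a)\in\mathbb{R}_{S_0}$ with $a_{\underline{0}}\neq 0$, the series $a_{\underline{0}}^{-1}\sum_{k\geq 0}(-\sigma(a)/a_{\underline{0}})^k$ is well defined, because in each fixed multi-index coefficient only finitely many terms contribute (every even monomial in $\sigma(a)^k$ has Grassmann length at least $2k$, by the same counting that appears in Example \ref{example:2.6}). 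The $G^\infty$-regularity of the inverse then follows from the Grassmann analytic continuation of the smooth reciprocal $t\mapsto 1/t$ applied on the image of the body map. Once this invertibility is in place, the four local expressions for $X$ are nowhere-vanishing generators of the respective local tangent modules and automatically agree on overlaps (having a common ambient origin), so $X$ is a global even frame and $TS^{1,0}\cong S^{1,0}\times\mathbb{R}_S^{1,0}$.
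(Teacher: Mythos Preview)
Your argument is correct and produces the same global frame as the paper---the restriction of $-y\,\partial_x+x\,\partial_y$---but the verification is organised differently. The paper defines super--trigonometric functions $\overline{\cos},\overline{\sin}:\mathbb{R}_{S_0}\to\mathbb{R}_{S_0}$ by their Taylor series, checks $D\overline{\sin}=\overline{\cos}$, $D\overline{\cos}=-\overline{\sin}$ and $\overline{\sin}^2+\overline{\cos}^2=1$, and thereby obtains a single global parametrisation $\theta\mapsto(\overline{\cos}\theta,\overline{\sin}\theta)$ of $S^{1,0}$; differentiating this curve gives the tangent vector $(-\overline{\sin}\theta,\overline{\cos}\theta)=(-y,x)$ and trivialises $TS^{1,0}$ in one stroke. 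You instead restrict the ambient field to the four charts of Proposition~\ref{proposition:4.3} and check that the local coefficient ($x$ on $U_{x^\pm}$, $-y$ on $U_{y^\pm}$) is a unit in the $G^\infty$ ring, using invertibility of elements of $\mathbb{R}_{S_0}$ with nonzero body. Your route is more elementary in that it avoids introducing and analysing the super trig functions and stays entirely within the chart machinery already built; the paper's route has the advantage of exhibiting $S^{1,0}$ as the image of a one-parameter curve (implicitly a Lie-supergroup picture, foreshadowing the remark after the proposition) and needs no chart-by-chart invertibility discussion. One small notational slip: in the paper's conventions $\mathbb{R}_S^{1,0}=\mathbb{R}_{S_0}$ is only the even part, so your final identification should read $TS^{1,0}\cong S^{1,0}\times\mathbb{R}_S$ (the fibre being a free $\mathbb{R}_S$-supermodule of rank $(1,0)$, hence all of $\mathbb{R}_S$), matching the paper's description $\{(x,y,-\lambda y,\lambda x)\mid\lambda\in\mathbb{R}_S\}$.
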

\begin{proof}
We will show that the tangent bundle of the supercircle is given by
$$\{ (x,y,-\lambda y,\lambda x)\mid \lambda \in \mathbb{R}_{S}, (x,y)\in S^{1,0} \}.$$
Define $\overline{\sin}:\mathbb{R}_{S_{0}}\rightarrow \mathbb{R}_{S}$ as the formal sum
$$\overline{\sin} \theta = \sum_{i=0}^{\infty} (-1)^{i}\frac{\theta^{2i+1}}{(2i+1)!}.$$
Similarly, define $\overline{\cos}:\mathbb{R}_{S_{0}}\rightarrow \mathbb{R}_{S}$ as the formal sum
$$\overline{\cos} \theta = \sum_{i=0}^{\infty} (-1)^{i}\frac{\theta^{2i}}{(2i)!}.$$
By properties of even elements, the images of both these functions turn out to be in $\mathbb{R}_{S_{0}}$ and hence is a valid candidate for parametrizing the supercircle.
Using the superderivation given by the supersmooth structure of $S^{1,0}$, we get
\begin{equation}
\label{eqn:sin}
D\overline{\sin} \theta = \partial_{E} \sum_{i=0}^{\infty} (-1)^{i}\frac{\theta^{2i+1}}{(2i+1)!}
= \sum_{i=0}^{\infty} \partial_{E} (-1)^{i}\frac{\theta^{2i+1}}{(2i+1)!}
= \sum_{i=0}^{\infty} (-1)^{i}\frac{\theta^{2i}}{(2i)!}
= \overline{\cos} \theta 
\end{equation}
Similarly, 
\begin{equation}
\label{eqn:cos}
D\overline{\cos} \theta = -\overline{\sin} \theta
\end{equation} 
Now, since $D$ is a superderivation, and hence a derivation on the underlying manifold, for any function $f:\mathbb{R}_{S_{0}}\rightarrow \mathbb{R}_{S}$ we get $f$ is a constant if $Df = 0$. Using the previous result (\ref{eqn:sin}) and (\ref{eqn:cos}), we have
\begin{align*}
D(\overline{\sin^{2}}\theta + \overline{\cos^{2}}\theta) &= 2\overline{\sin \theta}D\overline{\sin \theta} + 2\overline{\cos \theta}D\overline{\cos \theta} = 0
\end{align*}
Since $\overline{\sin} 0 = 0$ and $\overline{\cos} 0 = 1$, we have $\overline{\sin^{2}}\theta + \overline{\cos^{2}}\theta = 1$
Thus, $(\overline{\cos} \theta,\overline{\sin} \theta)$ parametrizes the supercircle $S^{1,0}= \{ (x,y)\in \mathbb{R}_{S}^{2,0}\vert x^{2}+y^{2}=1 \}$.
Hence, at a point $(\overline{\cos} \theta_{0},\overline{\sin} \theta_{0})$, the tangent vector will be $(-\lambda\overline{\sin} \theta_{0},\lambda\overline{\cos} \theta_{0})$ where $\lambda\in \mathbb{R}_{S}$. Thus, it follows that the tangent bundle of the supercircle is the trivial bundle $S^{1,0}\times \mathbb{R}_{S}$.\end{proof}
In general, it is possible that the tangent bundle of a super Lie group is trivial, which might be something we prove in future works.

\begin{remark}
\label{remark:4.5}
We can define supersphere of even dimension as $S^{m,0}=\{(x_0,\ldots,x_m)\in\mathbb{R}_{S}^{m+1,0}\mid  x_0^2+\cdots+x^m=1\}$.  We can also replace $\mathbb{R}_S^{m,0}$ with $\mathbb{R}_{S[L]}^{m,0}$ as follows:
$S^{m,0}=\{(x_0,\ldots,x_m)\in\mathbb{R}_{S[L]}^{m+1,0}\mid x_0^2+\cdots+x_m^0=1\}$.  The above proof can be generalized with little more calculation to show that in both cases the supersphere $S^{m,0}$ is a supermanifold of even dimension $(m,0)$.	
\end{remark}

\subsection{Super vector fields}
\label{subsec:4.2}
Kaplansky constructed projective modules using tangent bundles of spheres, \cite[Example 1, pp.~269]{Swa1962}.  Let us recall the construction.  Let $S^n=\{(x_o,x_1,\ldots,x_n)\mid \sum_{i=0}^n x_i^2=1\}$ be the unit sphere, and $C(S^n)$ the ring of continuous real-valued functions over $S^n$.  Consider the quotient ring $\Lambda= \mathbb{R}[x_0,x_1,\ldots,x_n]/(x_0^2 + x_1^2 + \cdots +x_n^2-1)$. Then $\Lambda$ can be identified with the subring of $C(S^n)$. Let $F$ be a free module over $\Lambda$ on $(n+1)$ generators $s_0,s_1,\ldots , s_n$. Define $g:F \to F$ by $g(s_i) = \displaystyle{\Sigma _j x_i x_j s_j}$. Then $g$ is idempotent, and $\ker g$ is a projective module over $\Lambda$.  Furthermore $C(S^n)\otimes P\cong \Gamma(S^n,TS^n)$, where $\Gamma(S^n,TS^n)$ is the module of global sections of the tangent bundle. Since we know that $TS^n$ is not trivial for $n\neq 0,1,3,7$, by The Swan's theorem, \cite[Theorem 2, pp.~268]{Swa1962}$, C(S^n)\otimes P$ is not free for $n\neq 0,1,3,7$.  Moreover, $\Img g$ is free, so $P$ is indeed a stably free module. This example yields many ``non-trivial" projective modules.

\begin{proposition}
\label{proposition:4.6}
Let $F$ be a free supermodule over a superring $R$. Define $g:F \longrightarrow F$, where $g$ is idempotent, that is, $g^2 = i$. Now consider the sequence, 
$$\begin{tikzcd}
0 \arrow[r] & \ker g \arrow[r, "\iota", hook] & F \arrow[r, "g"] & \mathrm{Im } \: g \arrow[r]                         & 0  
\end{tikzcd}$$
Then, $\text{ker } g \oplus \text{Im } g \cong F$. 
\end{proposition}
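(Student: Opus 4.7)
The plan is to use idempotence to produce an explicit section of the exact sequence displayed in the statement, and then reduce to the argument already used in the proof of Theorem~\ref{theorem:3.1}, $(3)\Rightarrow(1)$. Concretely, the inclusion $s:\mathrm{Im}\,g \hookrightarrow F$ is itself a section of $g:F\to \mathrm{Im}\,g$: any $m\in\mathrm{Im}\,g$ has the form $m=g(y)$, so $g(s(m))=g(m)=g^{2}(y)=g(y)=m$, giving $g\circ s=\mathrm{Id}_{\mathrm{Im}\,g}$. The existence of this section is the one place where the idempotence hypothesis $g^{2}=g$ (which the statement writes as $g^{2}=i$, evidently a typo) is used.

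Given the section, I would mimic the final paragraph of the proof of Theorem~\ref{theorem:3.1} and define $\phi:F\to \ker g\oplus \mathrm{Im}\,g$ by $\phi(f)=(f-g(f),\,g(f))$, with candidate inverse $\psi:\ker g\oplus \mathrm{Im}\,g\to F$ given by $\psi(k,m)=k+m$. Well-definedness of $\phi$ reduces to the observation $g(f-g(f))=g(f)-g^{2}(f)=0$, so $f-g(f)\in\ker g$. The identity $\psi\circ\phi=\mathrm{Id}_{F}$ is just the decomposition $f=(f-g(f))+g(f)$, while $\phi\circ\psi=\mathrm{Id}$ follows from $g(k)=0$ for $k\in\ker g$ together with $g(m)=m$ for $m\in\mathrm{Im}\,g$ (established above). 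Both $\phi$ and $\psi$ are $R$-linear by $R$-linearity of $g$.

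The super-specific content is minimal: provided $g$ is a homogeneous $R$-morphism of degree $0$ (the natural hypothesis in the intended superspheres application), the subsets $\ker g$ and $\mathrm{Im}\,g$ inherit the $\mathbb{Z}_{2}$-grading from $F$, namely $\ker g=(\ker g\cap F_{0})\oplus(\ker g\cap F_{1})$, and likewise for $\mathrm{Im}\,g$, so that both are genuine sub-supermodules and $\phi$ respects the grading. I do not anticipate a real obstacle here; the only care needed is to state homogeneity of $g$ explicitly and to verify that $\phi$ is a morphism in $R$-$\mathrm{SMod}$ rather than merely an $R$-module isomorphism. In short, the proposition is the super version of the elementary fact that an idempotent $R$-endomorphism of a module yields a splitting $F\cong\ker g\oplus\mathrm{Im}\,g$, with the splitting map being the inclusion of the image.
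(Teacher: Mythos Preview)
Your proof is correct and follows essentially the same approach as the paper: both rest on the decomposition $f=(f-g(f))+g(f)$ together with $g(f-g(f))=0$, and both use $g(m)=m$ for $m\in\mathrm{Im}\,g$ to verify the sum is direct. The paper presents this as a bare element-wise computation showing $\ker g\cap\mathrm{Im}\,g=0$, whereas you phrase it via an explicit section and mutual inverses (and add the useful remark on homogeneity of $g$, which the paper leaves implicit), but the underlying argument is the same.
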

\begin{proof}
Let $x \in F$ then $g(x) \in F$ lets say $g(x) = y$. Now, $g(y) = g(g(x)) = g^2(x) = g(x)$. Consider $x - g(x)$, 
$$g(x - g(x)) = g(x) - g^2(x) = g(x) - g(x) = 0.$$
Therefore, $x - g(x) \in \ker g$. But $x = g(x) + (x - g(x))$. Hence $F \cong \Ima g + \ker g$. Now we want to prove that it is a direct sum. Suppose, $x \in \ker g \cap \Ima g$. Let $x = g(y)$ as $x \in \Ima g$. Now, $g(x) = g(y)$ as $g(x) = g^2(y) = g(y)$. So, $g(x) \in \Ima g$, but $g(x) = 0$ because $x \in \ker g$. hence, we get $x = g(y) = g(x) = 0$. Therefore, $x = 0$. Hence, $\ker g + \Ima g$ is direct sum and $F \cong \ker g \oplus \Ima g$.
\end{proof}

\begin{example}
\label{example:4.7}
Let $R$ be a super commutative ring and consider $\Lambda = R [x_0, x_1, \ldots , x_n] / (x_0^2 + x_1^2 + \cdots + x_n^2 - 1)$, $x_i$ are commutative.  We denote by $\bar{x}_i$ the image of $x_i$ under the canonical quotient map from $R[x_0,\ldots,x_n]$ to $\Lambda$, for all $i=0,\ldots,n$. In particular $\sum_{i=0}^n \bar{x}_i^2=1$. Also, $\Lambda=R_0[\bar{x}_0,\ldots,\bar{x}_n]\oplus R_1[\bar{x}_0,\ldots,\bar{x}_n]$, so is clearly a super commutative ring. 
Let $F$ be a free super $\Lambda$-module on $n$ generators $s_0, s_1, \ldots ,s_n$. Define $g:F\to F$ by $g(s_i) = \Sigma _{j=0}^n \bar{x}_i \bar{x}_j s_j$ for all  $i= 0,1, \ldots ,n$. We get, 
\begin{align*}
g^2(s_i) &= g(g(s_i)) = g(\bar{x}_i\bar{x}_0 s_0 +\cdots + \bar{x}_i^2s_i + \cdots + \bar{x}_i \bar{x}_n s_n) \\
&= \bar{x}_i\bar{x}_0(\bar{x}_0^2 s_0 + \cdots + \bar{x}_0 \bar{x}_n s_n) +\cdots + \bar{x}_i^2 (\bar{x}_i \bar{x}_0 s_0 + \cdots + \bar{x}_i \bar{x}_n s_n) + \\
&\hspace*{6cm}\cdots + \bar{x}_i \bar{x}_n(\bar{x}_n \bar{x}_0 s_0 + \cdots + \bar{x}_n^2s_n) \\
&= \bar{x}_i\bar{x}_0^3 s_0 + \cdots + \bar{x}_i\bar{x}_0^2 \bar{x}_n s_n +\cdots + \bar{x}_i^3 \bar{x}_0 s_0 + \cdots + \bar{x}_i^3\bar{x}_n s_n + \\
&\hspace*{6cm} \cdots + \bar{x}_i \bar{x}_n^2\bar{x}_0 s_0 + \cdots + \bar{x}_i \bar{x}_n^3 s_n\\
&= \bar{x}_i\bar{x}_0 (\bar{x}_0^2+\cdots+\bar{x}_i^2+\cdots + \bar{x}_n^2) s_0 + \cdots + \bar{x}_i \bar{x}_n (\bar{x}_0^2+ \cdots +\bar{x}_i^2+\cdots + \bar{x}_n^2) s_n   \\ 
&= \bar{x}_i\bar{x}_0 s_0+ \cdots + \bar{x}_i \bar{x}_n s_n=g(s_i),
\end{align*}
for all $i=0,\ldots,n$.
Therefore, $g$ is idempotent. Hence, by Proposition \ref{proposition:4.6}, $P_n:=\ker g$ is super projective and $P_n \oplus \Ima g \cong F$. Let $\alpha=\sum_{j=0}^n \bar{x}_j s_j$. Then,  
\begin{align*}
g(\alpha)&=g(\bar{x}_0 s_0 + \bar{x}_1 s_1 + \cdots + \bar{x}_n s_n)\\ &= \bar{x}_0 g(s_0) + \bar{x}_1 g(s_1) + \cdots + \bar{x}_n g(s_n) \\
&= \bar{x}_0 (\bar{x}_0^2 s_0 + \cdots + \bar{x}_0 \bar{x}_n s_n) + \cdots +  \bar{x}_n (\bar{x}_n \bar{x}_0 s_0 + \cdots + \bar{x}_n^2 s_n)\\
&= \bar{x}_0^3 s_0 + \cdots + \bar{x}_0^2 \bar{x}_n s_n + \cdots + \bar{x}_n^2 \bar{x}_0 s_0 + \cdots + \bar{x}_n^3 s_1 \\
&= \bar{x}_0(\bar{x}_0^2 s_0 + \cdots + \bar{x}_n^2 s_0) + \cdots + \bar{x}_n (\bar{x}_0^2 s_n + \cdots + \bar{x}_n^2 s_n) \\
&= \bar{x}_0 s_0 + \cdots + \bar{x}_n s_n=\alpha. 
\end{align*}
This shows that $y\in \Ima g$. On the other hand, any element in $\Ima g$ is generated by $\bar{x}_i(\sum_{j=1}^n \bar{x}_j s_j)=\bar{x}_i\alpha$, for $i= 0,1, \ldots ,n$. Therefore, $\alpha$ generates $\Ima g$. Hence $\Ima g$ is a free supermodule of rank $1$. So $P_n = \ker g$ has a free complement, that is, $P_n$ direct sum with finitely generated free supermodules is a free supermodule. Hence, $P_n$ is stably free. 	
\end{example}

Note that in the above example $P_n$ might be free for some $n$. For example when $R=\mathbb{R}_{S}$, and $n=1$, then $P_1$ is free of rank $1$, Indeed, $P_1=\Lambda(\bar{x}_1s_0-\bar{x}_0s_1)$.  In order to identify for what $n$, $P_n$ is projective but not free we need to relate $\Lambda$ to the tangent bundle of the supersphere. 
Let $S^{m,0}=\{(x_0,\ldots,x_m\in \mathbb{R}_{S[L]}^{m,0}\mid\sum_{i=0}^n x_i^2=1\}$.  
We can identify $\Lambda$ as a subring of the superring $G^\infty(S^{m,0},\mathbb{R}_{S[L]})$.
And $\widetilde{P}_m:=G^\infty(S^{m,0},\mathbb{R}_{S[L]})\otimes P_n\cong \Gamma(S^{m,0},TS^{m,0})$, where $\Gamma(S^{m,0}, TS^{m,0})$ is the supermodule of global vector fields.  Now $G^\infty(S^{m,0},\mathbb{R}_S)\otimes P_n$ is super projective but not free depends upon $(S^{m,0}, DeWitt, G^\infty)$ is parallellizable or not. 

By \cite[Example 2.3.1, p~426]{Bru1988} $(S^{m,0}, DeWitt, G^\infty)$ supermanifold.  Note that by this example, $G^\infty$ coincides with $\mathcal{G}$. Since $S^{m,0}$ is an even supermanifold, by \cite[Proposition 3.4]{Bru1988} $T_x(S^{m,0})$ is isomorphic to $T_xS^m$ (since the underlying smooth manifold for $S^{m,0}$ is $S^m$.)  By \cite[Corollary 2.1]{Bru1988} $T(S^{m,0})$ is locally free and we know that $TS^m$ is locally free.  Hence, both $TS^{m,0}$ and $TS^m$ are finitely presented.  Hence, by \cite[Remark 2.9]{Mor2013} $TS^{m,0}$ is isomorphic to $TS^m$. Thus $\widetilde{P}_m$ is stably free but not free for $m\neq 0,1,3,7$.

\subsection{Supersphere $S^{2,2}$ and super projective modules}

In this section, we describe projective modules which are constructed from vector bundles over the supersphere of dimension $(2,2)$.  This construction is by Landi.
We are using notations and results from Landi's paper.  We are not describing them here as we are not using them in the actual calculations. 

Let $\mathbb{R}_{S[2r]}$ be a superring as defined in Example \ref{example:2.5}. We denote by $\mathbb{C}$ the field of complex numbers. Let $\mathbb{C}_{S[2r]}=\mathbb{R}_{S[2r]}\otimes \mathbb{C}$.  Clearly $\mathbb{C}_{S[2r]}$ is a superring.  And let 
${}^\Diamond:\mathbb{C}_{S[2r]}\to \mathbb{C}_{S[2r]}$ be a graded involution, \cite[page 51]{BBL1990} or \cite[equation (3.2)]{Lan2001}. Consider the $\mathbb{R}_{S[2r]}^{m,n}$ with the DeWitt topology, defined as in Section \ref{subsection:supermanifold} with supersmooth functions are $GH^\infty$.  We described $G^\infty$. Supersmooth functions, $GH^\infty$ are defined smilarly. See \cite[Chapter 4]{Rog2007} and \cite{BBL1990}.

A \emph{supersphere $S^{2,2}$} is defined as,
\begin{align*}
   S^{2,2} &=\{ (x_1, x_2, x_3, \xi_1, \xi_2)\in \mathbb{C}_{S[2r]}^{3,2}\mid \Sigma_{i = 1}^{3} x_i^2 + 2\xi_1 \xi_2=1,\\
   & \hspace*{5cm} \text{ and } x_i^\Diamond=x_i,\text{ for all } i=1,2,3 \}.
\end{align*}
This supersphere is isomorphic to $S^{2,0}\times \mathbb{R}_{S[2r]}^{0,2}$, where $S^{2,0}=\{(x,y,z)\in\mathbb{R}_{S[2r]}^{3,0}\vert x^2+y^2+z^2=1 \}$, \cite[Proposition 5.1]{BBL1990}.
Further the same proposition proves that $S^{2,2}$ is isomorphic to the quotient of Lie superalgebra $UOSP(1,2) / \mathcal{U}(1)$.  Here $UOSP(1,2)$ is a subalgebra of $\mathbb{M}_{3\times 3}(\mathbb{C}_{S[2r]})$, and elements of $UOSP(1,2)$ are parameterized by three variables. So, \[UOSP(1,2)=\{s(a,b,\eta)\vert a,b\in (\mathbb{C}_{S[2r]})_0, \eta\in (\mathbb{C}_{S[2r]})_1,\text{ and }aa^\Diamond+bb^\Diamond=1\},\] where $s(a,b,\eta)$ is as given in \cite[Equation 5.2]{BBL1990} or \cite[Equation 3.9]{Lan2001}.  And  $\mathcal{U}(1)=\{w\in(\mathbb{C}_{S[2r]})_0\vert ww^\Diamond=1\}$.

For given $n\in\mathbb{N}$, consider a map $\langle \psi_n\vert:UOSP(1,2)\to (\mathbb{C}_{S[2r]})_0^{n+1}\times (\mathbb{C}_{S[2r]})_1^n$, given by 

\begin{align*}
\langle \psi_n\vert\bigl(s(a,b,\eta)\bigr) &=\left((1-\frac{1}{8} \eta \eta^{\Diamond})\left((a^\Diamond)^n,\ldots, \sqrt{\binom nk} (a^\Diamond)^{n-k} (b^\Diamond)^k,\ldots,(b^\Diamond)^n\right),\right .\\
&\left .\frac{1}{2}\eta^\Diamond\left((a^\Diamond)^n,\ldots,\sqrt{\binom {n-1}{k}}(a^\Diamond)^{n-1-k}(b^\Diamond)^k, \ldots , (b^\Diamond)^{n-1}\right)\right),
\end{align*}
where $\binom nk = \frac{n!}{k!(n-k)!}$. See \cite[Subsection 4.2.1]{Lan2001} for details.  It is easy to check that, $\langle \psi_n\vert \psi_n\rangle$ is a constant $1$ function, where $\vert\psi_{n}\rangle= \langle \psi_{n}\vert^t$. Let $\mathbb{M}_{n+1,n}$ be the set of even matrices.
And, let the projector $p_n:UOSP(1,2)\to\mathbb{M}_{n+1,n}(\mathbb{C}_{S[2r]})$ defined by 
$p_n (s(a,b,\eta))= \bigl(\vert\psi_n\rangle \langle \psi_n \vert\bigr)\bigl(s(a,b,\eta)\bigr)$. Using this map we want to construct a super projective module.  

Before that, we need to define superring over which our super projective module will be constructed. Let $\mathcal{A}_{\mathbb{C}_{S[2r]}}$ (respectively $\mathcal{B}_{\mathbb{C}_{S[2r]}}$) denote the superring of global supersmooth functions on $S^{2,2}$ (respectively, $UOSP(1,2)$)which are $\mathbb{C}_{S[2r]}$-valued. In general, we define $\mathcal{A}_{\mathbb{C}_{S[2r]}}^{m,n}=GH^\infty(S^{2,2}, \mathbb{C}_{S[2r]}^{m,n})$ (respectively, $\mathcal{B}_{\mathbb{C}_{S[2r]}}^{m,n}={GH}^\infty(UOSP(1,2),\mathbb{C}_{S[2r]}^{m,n})$).

Let $\pi_n:\mathcal{B}_{\mathbb{C}_{S[2r]}}^{n+1,n}\to \mathcal{B}_{\mathbb{C}_{S[2r]}}^{n+1,n}$, be given by 
\begin{align*}
 \pi_n\begin{pmatrix}
\bar{f}; \bar{g}
\end{pmatrix}^t:UOSP(1,2)&\to \mathbb{C}_{S[2r]}^{n+1,n},\\ 
(a,b,\eta)& \mapsto \bigl(1- \frac{1}{8} \eta \eta^{\Diamond}\bigl)\sum_{k = 0}^{n} \sqrt{\binom{n}{k}}(a^{\Diamond})^{n-k} (b^{\Diamond})^k g_k(a,b,\eta)\\
& \hspace*{2 em} + \frac{1}{2} \eta^{\Diamond} \sum_{j = 0}^{n-1} \sqrt{\binom{n-1}{j}}(a^{\Diamond})^{n-1-j} (b^{\Diamond})^j f_j(a,b,\eta).
\end{align*}
where $(\bar{f};\bar{g})=\begin{matrix} (f_0 & f_1 &\cdots & f_n; & g_1 & \cdots& g_n)\end{matrix}$
By \cite[equation 4.16 subsection 4.2.1]{Lan2001} it is clear that $\pi_n^2=\pi_n$. Now using the quotient map from $UOSP(1,2)$ to $UOSP(1,2)/\mathcal{U}(1)$ and an isomorphism between $S^{2,2}$ and $UOSP(1,2)/\mathcal{U}(1)$, we get associated map $\widetilde{\pi_n}:
\mathcal{A}_{\mathbb{C}_{S[2r]}}^{n+1,n}\to \mathcal{A}_{\mathbb{C}_{S[2r]}}^{n+1,n}$, such that $\widetilde{\pi_n}^2=\widetilde{\pi_n}$.  Now by Proposition \ref{proposition:4.6} we get $\Ima \widetilde{\pi_n}$ is a super projective module. 
\vspace{.2cm}

\noindent{\bf Acknowledgments:}\\

The third author is thankful to the faculty and the administrative unit of the institute of mathematical sciences (IMSc) for their warm hospitality during the preparation of the paper.


\bibliography{sn-bibliography}%

\end{document}